\def\i{\mathbf{g}}
\def\u{\mathbf{g}}
\def\f{\mathbf{f}}
\def\F{\mathcal{F}}
\def\B{\mathcal{B}}
\def\Z{\mathbb{Z}}
\def\N{\mathbb{N}}
\def\P{\mathbb{P}}
\def\E{\mathbb{E}}
\def\YY{\Upsilon}
\def\RRR{\mathbb{R}}
\def\t{\textrm}
\def\d{\textrm{d}}
\def\w{\widetilde}
\def\w{\tilde}
\newcommand{\be} {\begin{equation}}
\newcommand{\ee} {\end{equation}}
\newcommand{\bea} {\begin{eqnarray}}
\newcommand{\eea} {\end{eqnarray}}
\newcommand{\Bea} {\begin{eqnarray*}}
\newcommand{\Eea} {\end{eqnarray*}}
\newtheorem{Thm}{Theorem}
\newtheorem{Lem}[Thm]{Lemma}
\newtheorem{Pte}[Thm]{Proposition}
\newtheorem{Cor}[Thm]{Corollary}
\theoremstyle{definition} 
\theoremstyle{definition} \newtheorem*{key}{Key words}
\theoremstyle{definition} \newtheorem*{ams}{A.M.S. Classification}
\theoremstyle{remark}
\begin{document}
\title{Surviving particles  for subcritical branching processes in random environment}
\author{Vincent Bansaye \footnote{Laboratoire de Probabilités et Modèles Aléatoires. Université Pierre et Marie Curie et C.N.R.S. UMR 7599. 175, rue du
Chevaleret, 75 013 Paris, France.
$\newline$
\emph{e-mail} : vincent.bansaye@upmc.fr }}
\maketitle \vspace{3cm}
\begin{abstract}
 The asymptotic behavior of a subcritical Branching Process in Random Environment (BPRE)
 starting with several particles depends on whether the BPRE is
strongly subcritical (SS), intermediate subcritical
(IS) or weakly subcritical (WS). %Descendances of particles for BPRE are not independent.
In the (SS+IS) case, the asymptotic probability of survival is proportional to the initial
 number of
particles, and conditionally on the
survival of the population, only one initial particle survives $a.s.$ These two properties
 do not hold in the (WS) case and different asymptotics are established, which require  new results on random walks with negative drift. We
 provide an interpretation
of these results by characterizing the sequence of environments
selected  when we condition on the survival of particles. This also
raises  the problem of the dependence of the Yaglom quasistationary
distributions on the initial number of particles and the asymptotic
behavior of the Q-process associated with a subcritical BPRE.
\end{abstract}
%\begin{key}
%\end{key}
%\begin{ams}
%\end{ams}
\begin{key}
Branching process in random environment (BPRE). Yaglom distribution. Q-process. Random walk with negative drift.
\end{key}
\begin{ams}
60J80, 60J85, 60K37.
\end{ams}

\section{Introduction}
Let $f$ be the generating function of a random probability measure on $\N$ and $(f_n)_{n\in\N}$ a sequence
of iid copies of $f$ which serve as random environment. We consider a Branching Process in Random Environment (BPRE) $(Z_n)_{n\in\N}$  induced by $(f_n)_{n\in\N}$  \cite{afa, slide,  at,   bpree, bpre}. This means that conditionally on the environment
 $(f_n)_{n\in\N}$, particles at generation $n$ reproduce
independently of each other and their offsprings have generating function $f_n$. 

We can think of  a population of plants which have a one year life-cycle. Each year the weather conditions (the environment) vary,  which impacts the reproductive success of the plant. Given the climate, all the plants reproduce according to the same  mechanism.

Then $Z_n$
is the number of particles at generation $n$
and $Z_{n+1}$ is the sum of $Z_n$ independent random variables with generating function $f_n$.
That is, for every $n\in\N$,
$$\E\big(s^{Z_{n+1}}\vert Z_0,\dots,Z_n; \ f_0,\dots,f_n\big)=f_n(s)^{Z_n} \qquad (0\leq s\leq 1).$$
In the whole paper, we denote by $\P_k$ the probability associated with $k$ initial particles and  $F_n:=f_{0}\circ \cdots\circ f_{n-1}$. Then,  we have for every $k\in\N$,
$$\E_k(s^{Z_{n+1}}\ \vert \ \ f_0, ..., \ f_n)=
F_{n+1}(s)^k \qquad (0\leq s\leq 1).$$

When the environment is deterministic ($i.e.$ $f$ is a deterministic generating function), this process is the Galton Watson
process (GW) and  $f$ is the generating function of the reproduction law. \\

In this  paper, we consider the subcritical case  :
$$\E\big(\t{log}(f'(1))\big)<0.$$
Then extinction occurs $a.s.$, that is
$$\P(\exists n\in\N : Z_n=0)=1. $$
For a subcritical GW process, if further $\E(Z_1\log^+(Z_1))<\infty$, then there exists $c>0$ such that
$\P(Z_n>0)\sim c f'(1)^n$ when $n$ tends to infinity  \cite{AN}. In
random environments, the asymptotic depends on whether the BPRE is
 strongly subcritical (SS), intermediate subcritical (IS) or
 weakly subcritical (WS) (see \cite{bpree} or the Preliminaries Section for details). A subcritical GW
 process is always strongly subcritical (SS).\\

In this paper, we study the role of the initial number of particles  in  such limit theorems.
 For a GW process, particles are independent. As a consequence, limit theorems
starting with several initial
particles derive from those for a single initial particle.  In random environment, particles do not reproduce  independently. Independence holds only conditionally on the
environment and  asymptotics may differ  from the GW case. \\

First,  we  determine  the dependence of  the asymptotic survival probability in terms of  the
initial number of particles. In that view, we  define
 $$\alpha_k:=\lim_{n\rightarrow\infty} \P_k(Z_n>0)/\P_1(Z_n>0).$$
 For a GW process, $\alpha_k=k$ and  the asymptotic survival
probability is proportional to the initial number of particles. This
equality still holds in the (SS+IS) case for BPRE, but not in the
(WS) case where a different asymptotic as $k\rightarrow\infty $  is established.
For the proof, we need an asymptotic result on random walks with
negative drift, which gives the sum  of the logarithms
of the mean number of offsprings for the  of successive environments. We refer to  \cite{Grey}
for asymptotics of the extinction  probability when the number of initial particles tends to infinity in the supercritical
case.\\

Moreover, when the BPRE is (SS) or (IS), if we condition on the survival of
the population at generation $n$, then 
 only one initial particle survives at generation $n$ when $n\rightarrow \infty$,  just as for a GW process. But this
does not hold in the (WS) case, as stated in Section 2.2. Thus,
(WS) BPRE conditioned to survive has  a supercritical behavior, as previously observed in
\cite{slide}. \\

In Section 3.3, we give an interpretation of these results in terms of environments. Conditioning 
on non-extinction induces a selection of environments with high reproduction law. In
the (SS+IS) case, we prove that the survival probability of the
branching process in the selected environments   is still  zero. This
is obvious  if environments are $a.s.$ subcritical, $i.e.$ $f'(1)<1$
$a.s.$ But in the (WS) case,
 conditioning
on the survival of the population selects only supercritical
environments, which means that  the sequence of selected  environments has
$a.s.$ a  positive survival probability. Finally
letting  the initial number of particles tend to infinity,  the
sequence of environments selected by conditioning on the survival of the population 
becomes subcritical again. \\

Finally, in Section 3.4, we consider the size of the population conditioned to survive and  we are  interested in the characterization of the Yaglom 
quasistationary distributions starting from $k$ particles :
$$\lim_{n\rightarrow\infty} \P_k(Z_n=i \ \vert \ Z_n>0) \qquad (i\geq 1).$$
In Section 3.5, we focus on the Q-process associated to the subcritical 
BPRE, which is defined for all $l_1,l_2, ...,l_n \in \N,$ by
$$\P_k(Y_1=l_1, ..., Y_n=l_n)=\lim_{p\rightarrow \infty} 
\P_k(Z_1=l_1,...,Z_n=l_n \ \vert \ Z_{n+p}>0 ).$$
See $\cite{AN}$ for details on the Q-process associated to  GW. Again, these results depend on 
the subcritical regime.

\section{Preliminaries}

We start by recalling some known results  for subcritical BPRE. Note that $s \in \RRR^+
\mapsto \E(f'(1)^s)$ is a  convex function and define $\gamma$ and $\alpha$ in $[0,1]$  such that
\be
\label{alpha}
\gamma:=\inf_{\theta \in [0,1]}\big\{ \E\big(f'(1)^\theta \big) \big\}=\E\big(f'(1)^\alpha \big).
\ee
From now on, we assume $\E(f'(1)\vert \log(f'(1)) \vert)<\infty$. Note that $0<\gamma<1$, $\gamma \leq \E(f'(1))$, and
$$\gamma=\E(f'(1)) \ \Leftrightarrow  \ \E\big(f'(1)\t{log}(f'(1))\big)\leq  0. $$
There are three subcritical regimes (see \cite{bpree}).
\begin{itemize}
\item[$\star$] The strongly subcritical case (SS), where $\E(f'(1)\t{log}(f'(1)))<0$. In this case, assuming further
$$\E(Z_1 \log^+(Z_1))<\infty,$$
then there exist $c,\alpha_k>0$ such that, as $n\rightarrow \infty$ :
\be
\label{equivun}
\P_k(Z_n>0)\sim c\alpha_k\E(f'(1))^n, \qquad \alpha_1=1.
\ee
\item[$\star$] The intermediate  subcritical case (IS), where
$\E(f'(1)\t{log}(f'(1)))=0$. In this case, assuming further
$$\E\big(f'(1)\log ^2(f'(1))\big)<\infty, \qquad \E\big([1+\log^-(f'(1))]f''(1)\big)<\infty,$$
then there exist $c,\alpha_k>0$ such that as $n\rightarrow \infty$ :
\be
\label{equivdeux}
\P_k(Z_n>0)\sim c \alpha_k n^{-1/2} \E(f'(1))^n, \qquad \alpha_1=1.
\ee
\item[$\star$] The weakly subcritical case (WS), where $0<\E(f'(1)\t{log}(f'(1)))<\infty$.
In this case, assuming further
$$\E(f''(1)/f'(1)^{1-\alpha})<\infty, \qquad \E(f''(1)/f'(1)^{2-\alpha})<\infty,$$
then there exist $c,\alpha_k>0$ such that as $n\rightarrow \infty$ :
\be
\label{equiv3}
\P_k(Z_n>0)\sim c \alpha_k n^{-3/2}\gamma^n, \qquad \alpha_1=1.
\ee
\end{itemize}
In the rest of the paper,  we  take  the integrability assumptions above for granted  for each  case. See \cite{Vat}
for asymptotics with  a weaker hypothesis
in the (IS) case. \\

It is also known that the process $Z_n$ starting from $k$ particles and conditioned
to be non zero converges to a finite positive random variable $\YY_k$, called
the Yaglom quasistationary distribution (see \cite{bpree}) :
$$\E_k\big(s^{Z_n}\vert \ Z_n>0)\stackrel{n\rightarrow\infty}{\longrightarrow} \E\big(s^{\YY_k}\big).$$
See Section 3.3 for discussions about $(\YY_k)_{k\in\N}$.
$\newline$
Actually, in \cite{bpree}, the result and the proof of the convergence is given  for $k=1$. It can
 be generalized to $k\geq 1$ with the following modifications. We borrow  Notations from \cite{bpree}
%\begin{displaymath}
$$f_{k,l}:= \left\{
\begin{array}{ll}  f_k \circ   f_{k+1} \circ \dots \circ    f_{l-1}, &  k<l \\
f_{k-1} \circ  f_{k-2} \circ  \dots \circ f_{l}, & k>l \\
\text{id}, &  k=l.
\end{array}
\right.
$$
%\end{displaymath}
Then $1-\E_k(s^{Z_n}\vert Z_n>0)=\E(1-f_{0,n}^k(s))/\P_k(Z_n>0)$.
Lemma 2.1 of \cite{bpree} still holds replacing $f_{0,n}$ by $f_{0,n}^k$ and
$\P(Z_n>0)$ by $\P_k(Z_n>0)$. Lemma 2.2  also still
holds and results of  Lemma 2.3 can now be stated as follows. By convexity of $x\in [0,1]\rightarrow x^k$
and $(f_n)_{n\in\N}$, for every $n\geq 0$, 
we have $a.s. \ \exp(-S_i)(1-f_{i,0}(s)^k)\leq 1 \ (0\leq s\leq 1)$, 
where  $S_i=\log(k)+\log(f_0'(1))+...+\log(f_{n-1}'(1))$. Moreover $\exp(-S_n)(1-f_{n,0}(s)^k)$
converges $a.s.$ as $n\rightarrow \infty$, which is a direct consequence of the convergence for $k=1$ given in Lemma 2.3 in \cite{bpree} (noting also that this implies $f_{n,0}(s)\rightarrow 0 \ a.s.$ as $n\rightarrow \infty$). 

$\newline$

Finally, we consider   the case where the generating functions of the reproduction laws are a.s.
linear fractional. Indeed in this case
the survival probability in a given environment can  be computed explicitly since linear fractional generating functions
are stable by composition. Specifically, we suppose that
 \be \label{lfc}
 f(s)=1-\frac{A}{1-B}+\frac{A s}{1-B s} \quad a.s. \qquad (0\leq s\leq 1),
 \ee
where $A,B$ are two r.v. such that $A
\in[0,1]$,  $B \in [0,1)$ and $A+B\leq 1$. In this case, setting  for every $i\in \N$,
$$P_i:=f_{n-i}'(1)...f_{n-1}'(1), \qquad (P_{0}=1),$$
 we have (see \cite{agr}, \cite{Guiv} or \cite{koz})
\be \label{explfc} \P_1(Z_n>0 \ \vert  \ f_0,..., \
f_{n-1})=1-F_n(0)=\bigg(1+\sum_{i=0}^{n-1}
\frac{f_{n-i-1}''(1)}{2f_{n-i-1}'(1)}P_i \bigg)^{-1}P_n. \ee 
Let us label by $i\in\N$ the  initial
particles  and denote by
$Z^{(i)}_n$ the number of descendants of particle $i$ at
generation $n$. As
conditionally on $(f_0, ...,f_{n-1})$, ($Z_n^{(i)}, \ i\geq 1$) is
an iid sequence, we  get \be \label{lfcm}
\P_k(Z_n^{(1)}>0,...,Z_n^{(k)}>0 \ \vert \ f_0,..., \ f_{n-1})=
\bigg(1+\sum_{i=0}^{n-1} \frac{f_{n-i-1}''(1)}{2f_{n-i-1}'(1)}P_i
\bigg)^{-k}P_n^k. \ee 

We can get now lower bounds for survival probabilities of a general BPRE by
 a coupling argument. We use  that  for every  probability
generating function $f_i$, we can
find a linear fractional probability
generating function $\w{f}_i$ such that for every $s\in [0,1]$, $\w{f}_i(s)\geq f_i(s)$, $\w{f}_i'(1)=f_i'(1)$,
 $\w{f}_i''(1)=2f_i''(1)$ (see \cite{Guiv} or \cite{koz}).
Then, $\w{F}_n(0)\geq F_n(0) \  a.s.$ ensures that
\be
\label{majavecun}
\P_1(Z_n>0 \ \vert  \ f_0,...,f_{n-1} )\geq \P_1(\w{Z}_n>0 \ \vert  \ \w{f}_0,...,\w{f}_{n-1}) \qquad a.s.
\ee
More generally, for every $k\geq 1$, 
\bea
&& \P_k(Z^{(1)}_n>0, \ Z^{(2)}_n>0, \ ..., \ Z^{(k)}_n>0 \ \vert  \ f_0,...,f_{n-1} )\nonumber \\
&=&(1-F_n(0))^k \nonumber \\
&\geq &  (1-\w{F}_n(0))^k \nonumber \\
\label{couplage}
&=&\P_k(\w{Z}^{(1)}_n>0, \ \w{Z}^{(2)}_n>0, \ ..., \ \w{Z}^{(k)}_n>0 \ \vert  \ \w{f}_0,...,\w{f}_{n-1}) \qquad a.s.
\eea
$\newline$

\section{Subcriticality starting from several  particles}

We specify here the asymptotics of  survival probabilities   starting with
$k$ particles. Then we determine how many initial particles survive conditionally on non extinction of particles and we characterize the sequence of environments which are selected by this conditioning.
Finally we consider the Yaglom
quasistationary distributions of $(Z_n)_{n\in\N}$ and  the associated Q-process. In the
(SS)  case, results are those expected, $i.e.$
they are  analogous to  those of a GW process.
In the (IS) case, results are different for the Yaglom
quasistationary distribution and the Q-process. In the
(WS) case, all results are  different. \\

Recall that we label by $i\in\N$ each particle of the initial
population  and denote by
$Z^{(i)}_n$ the number of descendants of particle $i$ at
generation $n$. Thus $(Z^{(i)}_n)_{n\in\N}$ are identically
distributed BPRE   ($i\in\N$), with common distribution
 $(Z_n)_{n\in\N}$ starting with one particle. Conditionally on the
environments, these processes are   independent :  for all $n,k, l_i$
$\in \N$,
 $$\P_k( Z_n^{(i)}=l_i, \ 1\leq i\leq k \ \vert \ f_0, ..., f_{n-1} )=
\Pi_{i=1}^k \P_1(Z_n=l_i \ \vert f_0, ..., f_{n-1}).$$
Moreover, under
$\P_k$, $(Z_n)_{n\in \N}$ is a.s. equal to $\big(\sum_{i=1}^{k} Z_n^{(i)}\big)_{n\in\N}.$
$\newline$

\subsection{Survival probabilities starting with several particles}

Note that $x\mapsto  \E\big(f'(1)^x\t{log}(f'(1))\big)$ increases with $x$.
\begin{Pte}  \label{equivalents}  For every $k\in\N^*$, \\

(i)  If $\E\big(f'(1)^k\emph{log}(f'(1))\big)<0$, then there exists $c_k>0$ such that
$$\P_k(Z^{(1)}_n>0, \ Z^{(2)}_n>0, \ ..., \ Z^{(k)}_n>0 )\stackrel{n\rightarrow \infty}{\sim}c_k \E(f'(1)^k)^{n}
     $$
 and   $\E(f'(1)^k)<\E(f'(1)^{k-1})<...<\E(f'(1))$.  \\

(ii) If $\E\big(f'(1)^k\emph{log}(f'(1))\big)=0$, then there exists $c_k>0$ such that
$$\P_k(Z^{(1)}_n>0, \ Z^{(2)}_n>0, \ ..., \ Z^{(k)}_n>0 )\stackrel{n\rightarrow \infty}{\sim}c_k n^{-1/2}\E(f'(1)^k)^n.
       $$

(iii) If $\E\big(f'(1)^k\emph{log}(f'(1))\big)>0$, then there exists $c_k>0$ such that
$$\P_k(Z^{(1)}_n>0, \ Z^{(2)}_n>0, \ ..., \ Z^{(k)}_n>0 )
\stackrel{n\rightarrow \infty}{\sim}c_k n^{-3/2} \w{\gamma}^n,$$
with $\w{\gamma}=\inf_{u \in \RRR^+} \{\E(f'(1)^u)\} \ \in \ (0,1)$ and $c=c_1\geq c_2\geq ...\geq c_k.$ \\

Moreover, in the (IS+WS) case, $\w{\gamma}=\gamma$. In the (SS) case, $\w{\gamma}<\gamma=\E(f'(1))$.
\end{Pte}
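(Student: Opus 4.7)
The plan is to exploit the conditional independence of the $k$ lineages: given the environment, the events $\{Z_n^{(i)}>0\}$ are i.i.d.\ with common probability $1-F_n(0)$, whence
\begin{equation*}
\P_k\bigl(Z_n^{(1)}>0,\ldots,Z_n^{(k)}>0\bigr)=\E\bigl[(1-F_n(0))^{k}\bigr].
\end{equation*}
The statement thus reduces to an asymptotic for the $k$-th moment of the quenched survival probability. I would first reduce to the linear fractional case via (\ref{majavecun})--(\ref{couplage}), in which (\ref{lfcm}) gives
\begin{equation*}
(1-F_n(0))^{k}=\frac{e^{kS_n}}{\bigl(1+\sum_{i=0}^{n-1}\eta_{i}P_{i}\bigr)^{k}},\qquad S_n=\sum_{j=0}^{n-1}\log f_{j}'(1),\ \ \eta_{i}=\tfrac{f_{n-i-1}''(1)}{2f_{n-i-1}'(1)};
\end{equation*}
the extension to general offspring laws proceeds by the same truncation argument that promotes (\ref{equivun})--(\ref{equiv3}) from linear fractional to general $f$ in \cite{bpree}.

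The engine of the proof is a Cramer-type tilt parallel to the one used for $k=1$ but with weight $f'(1)^{k}$: setting
\begin{equation*}
\frac{d\hat\P}{d\P}\bigg|_{\sigma(f_0,\dots,f_{n-1})}=\frac{e^{kS_n}}{\E(f'(1)^{k})^{n}},
\end{equation*}
the increments $\log f_j'(1)$ remain i.i.d.\ under $\hat\P$ with new mean $\hat\mu_k=\E(f'(1)^{k}\log f'(1))/\E(f'(1)^{k})$, whose sign is exactly the trichotomy (i)--(iii). The resulting identity
\begin{equation*}
\E\bigl[(1-F_n(0))^{k}\bigr]=\E(f'(1)^{k})^{n}\,\hat\E\Bigl[\bigl(1+\textstyle\sum\eta_{i}P_{i}\bigr)^{-k}\Bigr]
\end{equation*}
then reduces each regime to a random-walk question under $\hat\P$.

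In case (i) one has $\hat\mu_k<0$, so $S_n\to-\infty$ $\hat\P$-a.s., the reversed weights $P_i=e^{S_n-S_{n-i}}$ stay summable, and dominated convergence yields $\hat\E[\bigl(1+\sum\eta_iP_i\bigr)^{-k}]\to c_k\in(0,\infty)$; the strict chain $\E(f'(1)^{k})<\cdots<\E(f'(1))$ then follows from the strict convexity of $\theta\mapsto\E(f'(1)^\theta)$ together with the strict negativity of its derivative at $\theta=k$. In case (ii), $\hat\mu_k=0$ and the $n^{-1/2}$ correction is produced exactly by the centered-walk arguments of \cite{bpree,Vat}, applied to the functional $\bigl(1+\sum\eta_iP_i\bigr)^{-k}$ in place of the expression treated there.

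The main obstacle is case (iii): here $\hat\mu_k>0$, so the tilt at exponent $k$ over-shoots, and one must instead tilt at the minimizer $u^{*}\in(0,k)$ of $\theta\mapsto\E(f'(1)^\theta)$, under which the walk is centered. This recasts the quantity as
\begin{equation*}
\E\bigl[(1-F_n(0))^{k}\bigr]=\w\gamma^{\,n}\,\hat\E\Bigl[e^{(k-u^{*})S_n}\bigl(1+\textstyle\sum\eta_{i}P_{i}\bigr)^{-k}\Bigr],
\end{equation*}
and the balance between the exponential factor $e^{(k-u^{*})S_n}$ and the shape of a centered walk forced to keep $1+\sum\eta_iP_i$ of the right order is precisely the weakly-subcritical mechanism behind (\ref{equiv3}); adapting the fluctuation-theoretic estimates of \cite{bpree} for random walks conditioned to stay positive produces the $n^{-3/2}$ factor and identifies $c_k$. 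The monotonicity $c_1\geq\cdots\geq c_k$ (valid in the (WS) regime, where every $k'\leq k$ lies in case (iii) with common rate $\w\gamma$) follows from the obvious fact that the probability that all lineages survive is non-increasing in the number of lineages; and the comparison $\w\gamma=\gamma$ in (IS+WS) versus $\w\gamma<\gamma$ in (SS) is immediate from where $u^{*}$ sits with respect to $[0,1]$.
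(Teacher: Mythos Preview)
Your approach is essentially the same as the paper's: both rest on the identity $\P_k(Z_n^{(1)}>0,\ldots,Z_n^{(k)}>0)=\E[(1-F_n(0))^k]$ and on the exponential tilts by $f'(1)^k$ (cases (i)--(ii)) or by $f'(1)^{u^*}$ (case (iii)), with the linear fractional formula (\ref{lfcm}) supplying the explicit functional of the random walk. Two differences in execution are worth noting. First, for (i) the paper does \emph{not} reduce to linear fractional: after tilting by $f'(1)^k$, the ratio $(1-F_n(0))/P_n$ converges a.s.\ under $\hat\P$ to a nondegenerate limit in $(0,1]$ by Theorem~5 of \cite{at}, and bounded convergence finishes the general case directly. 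Second, and more interestingly, for (ii) and (iii) the paper avoids redoing any truncation argument from \cite{bpree} in the passage from linear fractional to general $f$. Instead it observes that the inclusion--exclusion identity
\[
\P_k(Z_n>0)=\sum_{i=1}^{k}(-1)^{i+1}\binom{k}{i}\P_i\bigl(Z_n^{(1)}>0,\ldots,Z_n^{(i)}>0\bigr),
\]
combined with the \emph{already established} asymptotics (\ref{equivdeux})--(\ref{equiv3}) for $\P_k(Z_n>0)$ from \cite{bpree}, forces by induction on $k$ the convergence of $n^{3/2}\w\gamma^{-n}\,\P_k(Z_n^{(1)}>0,\ldots,Z_n^{(k)}>0)$ (and similarly with $n^{1/2}$ in (ii)); the linear fractional lower bound (\ref{couplage}) is then used only to certify that the limit is strictly positive. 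This shortcut recycles the hard analysis of \cite{bpree} rather than reproducing it. Your route is sound but longer. Finally, in the linear fractional step the paper invokes Theorem~2.1 of \cite{Guiv} (with $\phi(x)=x^{k-\w\alpha}$, $\psi(x)=(1+x)^{-k}$) rather than reassembling the fluctuation estimates of \cite{bpree}; this is a packaging difference, not a mathematical one.
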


The proof is given in Section 4.1 and uses the case where the probability generating function $f$
is $a.s.$ linear fractional.

%\begin{Pte}
%\label{limicond}
%Conditionally on $Z^{(1)}_n>0$, $Z^{(1)}_n+Z^{(2)}_n+...+Z^{(k)}_n$ converges weakly
%as $n$ tends to infinity to $Y_k$ such that
%$$Y_k>0 \ \t{a.s}, \qquad  Y_1<_{stoch}Y_2<_{stoch}..<_{stoch}Y_k<_{stoch}...$$
%\end{Pte}
$\newline$

In the (SS+IS) case,  the asymptotic probability of survival of particles is
proportional to the number of initial particles, as stated below.
This is not surprising and well know for subcritical GW
process. But this does not hold in the (WS) case. Recall that
$\alpha_k$  is defined as
$\lim_{n\rightarrow\infty} \P_k(Z_n>0)/\P_1(Z_n>0)$.

\begin{Thm}
\label{asympta} In the (SS+IS) case, for every $k\in\N$, $\alpha_k=k$. \\

In the (WS) case, $\alpha_k \rightarrow  \infty$ as $k\rightarrow \infty$ and there exists $M_+>0$ such that
$$\alpha_k \leq  M_+ k^{\alpha}\log(k), \quad (k\geq 2), $$
% \int_0^1 [1-(1-x)^k]x^{-\alpha-1} u(-\log(x))\emph{d} x \sim k^{\alpha} \qquad (k\rightarrow \infty)$$
where $\alpha\in (0,1)$ is given by ($\ref{alpha}$). \\
Assuming further $\E(f'(1)^{1/2} \log (f'(1)))>0$ ($i.e.$ $\alpha<1/2$)
and that $f''(1)/f'(1)$ is bounded by a constant, there exists $M_{\_}>0$ such that
$$\alpha_k\geq M_{\_} k^{\alpha}\log(k), \quad (k\in\N).$$
\end{Thm}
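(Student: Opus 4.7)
The plan is to treat the (SS+IS) and (WS) regimes separately, using in both the pathwise envelope
\[ U_n := 1-F_n(0) \leq e^{L_n}, \qquad L_n := \min_{0\leq j\leq n} S_j, \qquad S_j := \sum_{i=0}^{j-1}\log f_i'(1). \]
This envelope follows from the monotonicity $U_n \leq U_j$ (survival at time $n$ implies survival at any earlier generation $j$) combined with the conditional Markov bound $U_j \leq \E_1(Z_j\,\vert\, f_0,\dots,f_{j-1}) = e^{S_j}$.

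For the (SS+IS) case I would combine the Bonferroni inequality
\[ k\,\P_1(Z_n>0) \geq \P_k(Z_n>0) \geq k\,\P_1(Z_n>0) - \binom{k}{2}\,\P_2(Z_n^{(1)}>0,Z_n^{(2)}>0) \]
with Proposition~\ref{equivalents}. A direct case-check on the sign of $\E(f'(1)^2\log f'(1))$ in (SS), using the ordering $\E(f'(1)^2)<\E(f'(1))$ of case~(i) and the comparison $\w\gamma<\gamma=\E(f'(1))$ of the last line of Proposition~\ref{equivalents}, together with the differing polynomial prefactors in (IS), shows that $\P_2(Z_n^{(1)}>0,Z_n^{(2)}>0)/\P_1(Z_n>0)\to 0$. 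Dividing the Bonferroni bounds by $\P_1(Z_n>0)$ and letting $n\to\infty$ yields $\alpha_k=k$.

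For the (WS) upper bound I would write
\[ \P_k(Z_n>0) = \E[1-(1-U_n)^k] \leq \E[\min(1,ke^{L_n})] = \P(L_n\geq -\log k) + k\,\E\!\left[e^{L_n}\ind_{\{L_n<-\log k\}}\right] \]
using $1-(1-x)^k\leq\min(1,kx)$ and the envelope, and then plug in the following Cram\'er-type estimate, which is one of the ``new results on random walks with negative drift'' announced in the introduction: in the (WS) regime,
\[ \P(L_n\geq -x) \leq C\,(1+x)\,e^{\alpha x}\,n^{-3/2}\,\gamma^n \qquad (x\geq 0,\ n\geq 1). \]
This is proved by exponentially tilting the walk with the Cram\'er parameter $\alpha$ (making it zero-drift), then invoking meander-type asymptotics (Feller, Iglehart) for the tilted walk conditioned on $L_n\geq 0$ with starting point $x$. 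An integration by parts against this tail bounds the second expectation by the same order, so that both terms are $O(k^\alpha\log k\cdot n^{-3/2}\gamma^n)$. Dividing by $\P_1(Z_n>0)\sim c_1\,n^{-3/2}\gamma^n$ yields $\alpha_k\leq M_+\,k^\alpha\log k$.

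For the (WS) lower bound, under $\alpha<1/2$ and $f''(1)/f'(1)\leq B$ a.s., I would use the linear-fractional coupling~(\ref{couplage}) to bound $\P_k(Z_n>0)\geq \E[1-(1-\w U_n)^k]$, where (\ref{explfc}) gives $\w U_n=(e^{-S_n}+\sum_{j=0}^{n-1}\w b_j e^{-S_{j+1}})^{-1}$ with $\w b_j:=f_j''(1)/f_j'(1)\leq B$. The plan is to show that on a sub-event $E_{n,k}\subseteq\{L_n\geq -\log k\}$ with $\P(E_{n,k})\gtrsim \P(L_n\geq -\log k)$, one has $\w U_n\geq c/k$, whence $1-(1-\w U_n)^k\geq 1-e^{-c}$. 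This reduces to a random walk lemma asserting that, conditionally on $\{L_n\geq -\log k\}$, the number of indices $j$ for which $S_j$ lies within $O(1)$ of its minimum is $O(1)$ with uniformly positive conditional probability; the assumption $\alpha<1/2$ is used to guarantee enough variance of the $\alpha$-tilted step for this concentration to hold uniformly in $k,n$. Combined with the matching lower bound $\P(L_n\geq -\log k)\gtrsim k^\alpha\log k\cdot n^{-3/2}\gamma^n$, this gives $\alpha_k\geq M_-\,k^\alpha\log k$. The unconditional assertion $\alpha_k\to\infty$ in (WS) is obtained by the same strategy, retaining only the envelope and the tail estimate without requiring the quantitative lower bound on $\w U_n$.

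The main obstacle is the Cram\'er-type bound on $\P(L_n\geq -x)$ with the sharp prefactor $(1+x)e^{\alpha x}$ uniform in $x,n$, together with its conditional refinement (control of the number of visits of $S$ to its minimum) needed in the lower bound; everything else is routine bookkeeping around the envelope $U_n\leq e^{L_n}$ and inclusion--exclusion.
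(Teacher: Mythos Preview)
Your treatment of the (SS+IS) case and of the (WS) upper and lower bounds is essentially the paper's proof. The paper writes $\P_k(Z_n>0)=k\int_0^1(1-x)^{k-1}\P(p(\f_n)\geq x)\,\d x$ and bounds $\P(p(\f_n)\geq x)$ above by $\P(e^{L_n}\geq x)$ and below (under the extra hypotheses) by $\tfrac14\P(L_n\geq\log(\mu x))$, using exactly the linear--fractional coupling and the random walk lemma you describe; your $\min(1,kx)$ splitting is just an alternative bookkeeping of the same integral and leads to the same $k^\alpha\log k$ order.

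There is, however, a gap in your argument for the \emph{unconditional} claim $\alpha_k\to\infty$ in the (WS) case. You write that it ``is obtained by the same strategy, retaining only the envelope and the tail estimate without requiring the quantitative lower bound on $\w U_n$''. But the envelope $U_n\leq e^{L_n}$ and the tail bound on $\P(L_n\geq -x)$ only produce \emph{upper} bounds on $\P_k(Z_n>0)$; without some lower bound on $U_n$ (which is precisely what your quantitative estimate on $\w U_n$ provides, and which needs $\alpha<1/2$ and $f''(1)/f'(1)$ bounded) you cannot conclude that $\alpha_k$ is unbounded. The theorem asserts $\alpha_k\to\infty$ in the (WS) regime \emph{without} those extra hypotheses, so a different argument is required.

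The paper's device here is quite different from the random walk approach: from $\P_1(Z_{p+n}>0)=\sum_{l\geq 1}\P_1(Z_p=l)\P_l(Z_n>0)$ one divides by $\P_1(Z_n>0)$, uses $\P_l(Z_n>0)/\P_1(Z_n>0)\leq l$ and dominated convergence to let $n\to\infty$, obtaining the identity
\[
\gamma^{\,p}=\sum_{l\geq 1}\P_1(Z_p=l)\,\alpha_l\qquad(p\geq 1).
\]
If $(\alpha_l)$ were bounded by some $A$, this would give $\gamma^{\,p}\leq A\,\P_1(Z_p>0)\sim Ac\,p^{-3/2}\gamma^{\,p}$, a contradiction as $p\to\infty$. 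This argument uses only the asymptotic $(\ref{equiv3})$ and needs no control on $f''(1)/f'(1)$ or on $\alpha$. You should replace your last sentence by this identity-based argument (or supply an alternative lower bound valid in the full (WS) range).
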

One can naturally conjecture that the last result still holds for
$1/2\leq \alpha<1$. The proof also uses  the linear fractional case
where, conditionally on  the environments, the survival probability is related to
  a random walk whose jumps are the $\log$ of the means of the reproduction law of the environments. This is
why  we  need  to prove  a result about random walk with negative drift conditioned to be larger than $-x<0$
(see Appendix). One way to  generalize the last result of the theorem
above to the case $\E(f'(1)^{1/2} \log (f'(1)))>0$ ($i.e.$ $\alpha<1/2$)  would be to improve Lemma \ref{rw}.
$\newline$

\subsection{Survival of initial particles conditionally on non-extinction}

We  turn our attention to the number of  particles that survive when we condition on the survival
of the whole population of particles. More precisely, denote by $N_n$ the number of particles in
generation $0$ whose descendance is alive at generation $n$. That
is, starting with $k$  particles :
$$N_n:=\#\{1 \leq i \leq k \ : \ Z_n^{(i)}>0\}.$$
We have the following elementary consequence
of Proposition \ref{equivalents}.
\begin{Pte}
\label{deux}
In the (SS+IS) case, for every $k\geq 1$,
$$ \lim_{n\rightarrow\infty}
\P_k(N_n>1 \ \vert \ Z_n>0)=0.$$
In the (WS) case, for every $k\geq 1$,
$$\lim_{n\rightarrow\infty}\P_k( N_n=k \vert \ Z_n>0)>0.$$
\end{Pte}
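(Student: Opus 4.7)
The plan is to derive both parts of Proposition~\ref{deux} directly from Proposition~\ref{equivalents} and Theorem~\ref{asympta}, exploiting that the $k$ subtrees $(Z_n^{(i)})_{1\le i\le k}$ are exchangeable and conditionally i.i.d.\ given the environment, so that the joint survival probability of any subset of initial particles depends only on its cardinality.

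For the (SS+IS) half, I would combine exchangeability with a union bound
\[
\P_k(N_n\geq 2)\;\le\;\binom{k}{2}\,\P_2\bigl(Z_n^{(1)}>0,\,Z_n^{(2)}>0\bigr),
\]
and feed the right-hand side into Proposition~\ref{equivalents} applied with the parameter $2$. Set $g(s):=\E(f'(1)^s)$; $g$ is convex and, as noted in the excerpt, $g'(s)=\E(f'(1)^s\log f'(1))$ is strictly increasing. A short case analysis then shows that in every subregime the asymptotic rate of $\P_2(\cdot,\cdot>0)$ is strictly smaller than $g(1)=\E(f'(1))$ exponentially, or at worst by an extra $n^{-1}$ factor. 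In (SS) we have $g'(1)<0$, and whenever $g'(2)\le 0$ the function $g$ is decreasing on $[1,2]$, giving rate $g(2)<g(1)$; if instead $g'(2)>0$, the full-line minimiser lies in $(1,2)$ and Proposition~\ref{equivalents}(iii) yields rate $\widetilde\gamma^n n^{-3/2}$ with $\widetilde\gamma<g(1)$. In (IS), $g'(1)=0$ already forces $g'(2)>0$, so case~(iii) applies with $\widetilde\gamma=\gamma=\E(f'(1))$ and rate $n^{-3/2}\E(f'(1))^n$. Dividing by $\P_k(Z_n>0)\sim ck\,\E(f'(1))^n$ in (SS) and $\sim ck\,n^{-1/2}\E(f'(1))^n$ in (IS), supplied by Theorem~\ref{asympta}, produces a ratio tending to $0$, hence $\P_k(N_n\ge 2\mid Z_n>0)\to 0$.

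For the (WS) half, the fact that $\alpha<1$ together with the strict monotonicity of $g'$ implies $g'(k)>0$ for every integer $k\ge 1$, so Proposition~\ref{equivalents}(iii) applies to any $k$ with $\widetilde\gamma=\gamma$:
\[
\P_k\bigl(Z_n^{(1)}>0,\dots,Z_n^{(k)}>0\bigr)\;\sim\;c_k\,n^{-3/2}\gamma^n.
\]
Theorem~\ref{asympta} simultaneously gives $\P_k(Z_n>0)\sim c\,\alpha_k\,n^{-3/2}\gamma^n$ with $0<\alpha_k<\infty$, so
\[
\lim_{n\to\infty}\P_k(N_n=k\mid Z_n>0)\;=\;\frac{c_k}{c\,\alpha_k}\;>\;0,
\]
which is the claim. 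No serious obstacle arises; the only delicate bookkeeping is the convexity argument that places the pair-survival rate strictly below $\E(f'(1))$ in (SS), and this is already built into the definition of the three subcritical regimes.
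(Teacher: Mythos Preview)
Your argument is essentially the paper's own: a union bound $\P_k(N_n\ge 2)\le\binom{k}{2}\P_2(Z_n^{(1)}>0,Z_n^{(2)}>0)$ together with Proposition~\ref{equivalents} for the numerator and the known survival asymptotics for the denominator in the (SS+IS) case, and a direct ratio of Proposition~\ref{equivalents}(iii) to the survival asymptotic in the (WS) case. The convexity case analysis you spell out is exactly what the paper leaves implicit when it cites ``Proposition~\ref{equivalents} (i--ii--iii)''.

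There is, however, a circularity you should repair. For the denominator you invoke Theorem~\ref{asympta} to write $\P_k(Z_n>0)\sim ck\,\E(f'(1))^n$ (resp.\ $ck\,n^{-1/2}\E(f'(1))^n$). But the paper's proof of $\alpha_k=k$ in Theorem~\ref{asympta} \emph{uses} Proposition~\ref{deux}. The fix is immediate: cite instead the Preliminaries asymptotics \eqref{equivun}, \eqref{equivdeux}, \eqref{equiv3}, which already give $\P_k(Z_n>0)\sim c\,\alpha_k\cdot(\text{rate})$ with $\alpha_k>0$. Your ratio argument only needs positivity of $\alpha_k$ and the exponential/polynomial rate, not the exact value $\alpha_k=k$, so nothing else changes. (In the (WS) part you in fact only use \eqref{equiv3}, so the citation to Theorem~\ref{asympta} there is simply misplaced rather than circular.)
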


Thus, for (SS+IS) BPRE, conditionally on the survival of the
population, only one initial particle survives, as for GW.
But for (WS) BPRE, several initial particles  survive with
positive probability. Forthcoming Theorem \ref{env} gives an
interpretation of this property in terms of selection of favorable
environments by conditioning on non-extinction. This result has an application to the branching model for cell division with parasite infection considered in \cite{vb}. In particular it ensures that the separation of descendances of parasites (see Section 6.3 in \cite{vb}) holds only in the (SS+IS) case.
In the same vein, we refer to  \cite{reduced} for results on the reduced  process associated with
subcritical  BPRE in the linear fractional case : In the (WS) case, the number of particles 
of the reduced process is not $a.s.$ equal to $1$ in the first generations. \\

We next consider the situation  when the number of initial particles tends to infinity
in the (WS) case. We shall see  that the number of initial particles which survive conditionally on non-extinction   is finite $a.s.$ but
not bounded.

\begin{Thm} \label{contrnb} In the (WS) case, assuming $\E(f'(1)^{1/2}\log (f'(1)))>0$ ($i.e.$ $\alpha<1/2$)
and $f''(1)/f'(1)$ is bounded by a constant, there exist $A_l \downarrow _{l\rightarrow\infty}  0$ such that
for all $k\geq l\geq 0$,
$$
\limsup_{n\rightarrow \infty} \P_k(N_n\geq l \  \vert \ Z_n>0)\leq A_l.
$$

Moreover, for every $l\in \N^*$,
$$\liminf_{k\rightarrow \infty}\liminf_{n\rightarrow\infty} \P_k(N_n= l \  \vert \ Z_n>0)>0.$$
\end{Thm}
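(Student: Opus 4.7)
The proof works in the linear fractional setting, where by (\ref{lfcm}), conditionally on the environments $N_n$ has law $\mathrm{Bin}(k, q_n)$ with $q_n = 1 - F_n(0)$ given explicitly by (\ref{explfc}); the general (WS) case is reduced to this via the coupling (\ref{couplage}) together with Proposition \ref{equivalents}. Setting $V := k q_n$ and writing $\mu_{k,n}$ for the law of $V$ under $\P_k(\cdot \mid Z_n > 0)$, one has
$$
\P_k(N_n \geq l \mid Z_n > 0) \;=\; \int_0^{\infty} \frac{\P(\mathrm{Bin}(k, v/k) \geq l)}{1 - (1 - v/k)^k}\, \mu_{k,n}(dv),
$$
and as $k \to \infty$ the integrand converges uniformly on compact subsets of $(0, \infty)$ to $\phi_l(v) := \P(\mathrm{Poi}(v) \geq l)/(1 - e^{-v})$, a bounded, continuous, strictly positive function with $\phi_l(v) \to 0$ pointwise as $l \to \infty$.

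The crucial step is a quantitative tightness estimate for $\mu_{k,n}$. Using the representation $q_n = e^{S_n}/U_n$ with $S_n = \log f_0'(1) + \cdots + \log f_{n-1}'(1)$ and $U_n = 1 + \sum_{i=0}^{n-1}\eta_i e^{S_n - S_{n-i}}$ (where $\eta_i = f_{n-i-1}''(1)/(2 f_{n-i-1}'(1))$ is bounded by hypothesis), the events $\{V > K\}$ and $\{V \leq \epsilon\}$ translate into random-walk events for $S_n$. The refined Cram\'er / excursion estimate of Lemma \ref{rw}, valid under $\alpha < 1/2$, gives $\P(q_n > t) \asymp t^{-\alpha}\, n^{-3/2}\gamma^n$ for $t$ small (the $n^{-3/2}$ factor matching the survival asymptotic because low-$q_n$ paths are suppressed by the walk excursion constraint). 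Dividing by $\P_k(Z_n > 0) \asymp \alpha_k\, n^{-3/2}\gamma^n$ and using $\alpha_k \geq M_-\, k^\alpha \log k$ from Theorem \ref{asympta}, one obtains, uniformly in $k \geq 2$,
$$
\limsup_{n \to \infty} \mu_{k,n}((K, \infty)) \leq \frac{C_1}{K^\alpha}, \qquad \limsup_{n \to \infty} \mu_{k,n}((0, \epsilon]) \leq C_2\, \epsilon^{1-\alpha}.
$$
The case $k = 1$ is direct since $V \leq 1$ a.s.

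Given these tail bounds, the upper bound $A_l$ is obtained by splitting the integral: for $v \in (0, 1]$ elementary binomial estimates give integrand $\leq v^{l-1}/(l-1)!$; for $v \in [1, K]$ the integrand is bounded by $C_3 K^l e^K/l!$; for $v > K$ it is bounded by $1$. Combining,
$$
\P_k(N_n \geq l \mid Z_n > 0) \leq \frac{1}{(l-1)!} + \frac{C_3 K^l e^K}{l!} + \frac{C_1}{K^\alpha}.
$$
Choosing $K = K_l = (\log l)/2$ (so that $K_l^l e^{K_l}/l! \to 0$ by Stirling), the right-hand side defines $A_l \downarrow 0$ uniform in $k \geq 2$. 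For the liminf statement, fix $\epsilon, K$ with $C_2 \epsilon^{1-\alpha} + C_1/K^\alpha \leq 1/2$; then $\mu_{k,n}([\epsilon, K]) \geq 1/2$ for $k, n$ large, and integrating the limit $\phi_l$ (continuous and strictly positive on $[\epsilon, K]$) against this mass gives
$$
\liminf_{k \to \infty} \liminf_{n \to \infty} \P_k(N_n = l \mid Z_n > 0) \geq \tfrac{1}{2}\, \inf_{v \in [\epsilon, K]} \phi_l(v) > 0.
$$

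The main obstacle is the tail estimate on $\mu_{k,n}$, where the correct power $n^{-3/2}$ for $\P(q_n > t)$ must be extracted: this is where Lemma \ref{rw}, the hypothesis $\alpha < 1/2$, and the boundedness of $f''/f'$ all enter. As noted after Theorem \ref{asympta}, extending the result to $\alpha \in [1/2, 1)$ would require sharpening that lemma. A secondary subtlety is the reduction from general BPRE to the linear fractional case, where the one-sided coupling (\ref{couplage}) must be combined with (\ref{equiv3}) to get matching two-sided control on $\P_k(Z_n > 0)$.
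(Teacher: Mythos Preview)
Your strategy—rescale to $V=k\,p(\f_n)$, establish tightness of its conditional law under $\P_k(\cdot\mid Z_n>0)$ uniformly in $k$, and pass to the Poisson limit—is a legitimate and more conceptual alternative to the paper's route. The paper instead works directly with $\P_k(N_n=l)=\int_0^1\P(p(\f_n)\in\d x)\binom{k}{l}x^l(1-x)^{k-l}$, bounds the integrand via $p(\f_n)\le e^{L_n}$, splits at $x=l/k$, and carries out explicit Beta-function and Stirling estimates to produce $A_l$ of order $l^{-\alpha}$; for the $\liminf$ it uses the lower half of Lemma~\ref{majpsur} to place mass of $p(\f_n)$ in an interval $[l/k,Nl/k)$. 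Both arguments ultimately rest on the same three ingredients: Lemma~\ref{majpsur}, the random-walk asymptotic (\ref{min}), and the bound $\alpha_k\ge M_-k^{\alpha}\log k$ from Theorem~\ref{asympta}.

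There is, however, a genuine gap in your reduction from the general case to the linear fractional one. First, $N_n\mid\f_n\sim\mathrm{Bin}(k,p(\f_n))$ holds in full generality by conditional independence of the initial particles, so no reduction is needed to obtain that representation. More importantly, the coupling (\ref{couplage}) yields only the one-sided comparison $p(\f_n)\ge 1-\w{F}_n(0)$, so $N_n$ is stochastically \emph{larger} in the general case than in the linear fractional one; an upper bound on $\P_k(N_n\ge l)$ proved in the linear fractional setting therefore does not transfer, and since $\P_k(Z_n>0)$ shifts in the same direction the conditional probability is uncontrolled by this device. The paper never attempts such a reduction: it bounds the general $p(\f_n)$ directly, using the elementary $p(\f_n)\le e^{L_n}$ on the upper side and $\P(p(\f_n)\ge x)\ge\P(L_n\ge\log(\mu x))/4$ on the lower side (this is Lemma~\ref{majpsur}, which is the correct citation here rather than Lemma~\ref{rw}). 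Your tightness bounds on $\mu_{k,n}$ should be proved the same way, dropping the explicit linear fractional formula $q_n=e^{S_n}/U_n$; once you do so the argument goes through. Note also that the correct asymptotic carries a slowly varying factor, $\P(\exp(L_n)>t)\sim t^{-\alpha}u(\log(1/t))\,n^{-3/2}\gamma^n$, and it is precisely this $\log$ that cancels the $\log k$ in $\alpha_k$ when you verify uniformity in $k$.
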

Thus, under the conditions of the theorem, 
$$
\limsup_{k\rightarrow \infty}
\limsup_{n\rightarrow \infty} \P_k(N_n\geq l \  \vert \ Z_n>0)\leq A_l, \quad \t{with} \ A_l \downarrow _{l\rightarrow\infty}  0.
$$
$\newline$

\subsection{Selection of environments conditionally on non-extinction}
We characterize here  the sequence of environments which are selected by
conditioning on the survival of particles. \\

We  denote by  $\F$ the set of generating functions and
for every $\i _n=(g_0,\dots,g_{n-1})\in \F^n$, by
$Z_{\i_n}$ the value at generation $n$ of the   branching process in varying environment whose reproduction law at
generation $l\leq n-1$ has  generating function $g_l$. Thus, for every $k\geq 1$,
\be
\label{Z}
\E_k(s^{Z_{\i_n}})=[g_0 \circ g_1\circ
\dots \circ g_{n-1}(s)] ^k \qquad (0\leq s\leq 1).
\ee
Then we denote by $p(\u _n)$ the survival probability of
a particle in environment $\u _n$:
\be
\label{probaZ}
p(\u _n):=\P_1(Z_{\u _n}>0).
\ee
$\newline$

Denote by $\f_n$ the sequence of environments until time $n$, $i.e.$
$$\f_n:=(f_0,f_1,\dots,
f_{n-1}).$$
In the subcritical case, $p(\f _n)\rightarrow 0$ $a.s.$ as $n\rightarrow \infty$  since  $(Z_n)_{n\in\N}$ becomes extinct $a.s.$ Roughly speaking, the sequences of  environments have $a.s.$ zero survival
probability. In the (SS+IS) case, conditioning on the survival
of particles does not change this fact, but it does in the (WS) case, as we can guess using
 Proposition \ref{deux}.  Coming back to the model of plants in random weather,  the survival of flowers in the (SS+IS) case is due to the exceptional  reproduction of plants (despite the weather), whereas in the (WS) case it is due to nice weather (and regular reproduction of plants). \\
More precisely, we prove   that in the (WS) case, the sequence of environments which are
selected   by conditioning on $Z_n>0$ have $a.s.$ a positive
survival probability. Thus, they are 'supercritical'. In
$\cite{slide}$, the authors had already remarked this supercritical
behavior of the BPRE $(Z_n)_{n\in\N}$ in the (WS) case by giving
an analog of the Kesten-Stigum theorem, $i.e.$ the convergence
of $Z_n/m^n$. 
\begin{Thm} \label{env} In the (SS+IS) case, for all $k\geq 1$,  $\epsilon>0$,
$$\lim_{n\rightarrow\infty} \P_k(p(\f_n)\geq \epsilon \ \vert \ Z_n>0)=0.$$

In the (WS) case, for every $k\geq 1$,
$$\liminf_{n\rightarrow \infty} \P_k(p(\f_n)\geq \epsilon \ \vert \ Z_n>0) \stackrel{\epsilon \rightarrow 0+}{\longrightarrow }1.$$
\end{Thm}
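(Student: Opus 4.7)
\medskip
\noindent\textbf{Proof plan.}

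For the (SS+IS) case, my plan is a second-moment argument on the random variable $p(\f_n)$. Since conditionally on $\f_n$ the $k$ initial particles produce independent lineages of common survival probability $p(\f_n)=1-F_n(0)$, we have
$$\E[p(\f_n)^2]=\P_2(Z^{(1)}_n>0,Z^{(2)}_n>0),$$
so Proposition \ref{equivalents} applied with $k=2$ governs the rate of $\E[p(\f_n)^2]$. Let $\phi(\theta):=\E(f'(1)^\theta)$. In (SS), $\phi$ is convex with $\phi'(1)<0$: either $\phi'(2)\leq 0$, placing us in case (i) or (ii) with rate $\phi(2)^n$ and $\phi(2)<\phi(1)$ by monotonicity of $\phi$ on $[0,2]$; or $\phi'(2)>0$, giving case (iii) with rate $\w\gamma^n n^{-3/2}$ and $\w\gamma<\phi(1)$. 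In (IS), $\phi'(1)=0$ forces $\phi'(2)>0$ (strict convexity of $\phi$, unless $f'(1)=1$ a.s.), placing us in case (iii) with rate $\gamma^n n^{-3/2}$, strictly faster than the rate $\gamma^n n^{-1/2}$ of $\P_1(Z_n>0)$. In all cases $\E[p(\f_n)^2]=o(\P_1(Z_n>0))$, and Markov's inequality gives
$$\P_k(p(\f_n)\geq\epsilon,\,Z_n>0)\leq\P(p(\f_n)\geq\epsilon)\leq\epsilon^{-2}\E[p(\f_n)^2]=o(\P_k(Z_n>0)),$$
using $\P_k(Z_n>0)\sim k\,\P_1(Z_n>0)$ from Theorem \ref{asympta}. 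This yields the (SS+IS) part.

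For the (WS) case, the elementary inequality $1-(1-p)^k\leq kp$ gives
$$\P_k(p(\f_n)<\epsilon,\,Z_n>0)=\E\big[(1-(1-p(\f_n))^k)\ind_{p(\f_n)<\epsilon}\big]\leq k\,\E[p(\f_n)\ind_{p(\f_n)<\epsilon}],$$
so, comparing to $\P_k(Z_n>0)\sim c\alpha_k n^{-3/2}\gamma^n\asymp\E[p(\f_n)]$, it suffices to show
$$\lim_{\epsilon\to 0^+}\liminf_{n\to\infty}\frac{\E[p(\f_n)\ind_{p(\f_n)\geq\epsilon}]}{\E[p(\f_n)]}=1.$$
A second-moment bound alone yields only $\liminf_n(\cdots)\geq c_2/c-\epsilon$, which need not be close to $1$. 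To get the full statement I would invoke the coupling (\ref{majavecun}): $p(\f_n)\geq\w p(\w\f_n)$ a.s., and by the explicit formula (\ref{explfc}) $\w p$ is a functional of the random walk $S_n:=\sum_{i<n}\log f_i'(1)$ and of $\eta_i:=f_i''(1)/(2f_i'(1))$. Applying the Cram\'er tilt with exponent $\alpha$ from (\ref{alpha}) turns $(S_j)$ into a centered random walk and produces the $\gamma^n$ factor. A classical Petrov-Spitzer type estimate shows that, under this tilt, the event $\{\min_{j\leq n}S_j\geq-M,\,|S_n|\leq M\}$ has probability of order $n^{-3/2}$; on this event, the sum in the denominator of (\ref{explfc}) is $O(1)$, so $\w p(\w\f_n)$ is bounded below by some constant $c_0(M)>0$. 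Combining these gives $\P(p(\f_n)\geq c_0(M))\gtrsim n^{-3/2}\gamma^n$, and sending $M\to\infty$ along a diagonal lets us reduce the contribution of $\{p(\f_n)<\epsilon\}$ to $\E[p(\f_n)]$ to an arbitrarily small fraction as $\epsilon\to 0^+$.

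The main obstacle is the (WS) step. The moment identities $\E[p(\f_n)^j]/\E[p(\f_n)]\to c_j/c$ coming from Proposition \ref{equivalents} do not, on their own, exclude an atom at $0$ in the limiting size-biased law of $p(\f_n)$, since the sequence $c_j$ is merely decreasing. A genuine random walk analysis, in the spirit of the proof of Proposition \ref{equivalents}(iii) and of the auxiliary lemma on negatively drifted walks alluded to after Theorem \ref{asympta}, therefore appears necessary to establish the required uniform lower bound on $p(\f_n)$ on surviving environments.
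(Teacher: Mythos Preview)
Your (SS+IS) argument is correct and essentially coincides with the paper's: the paper phrases the bound as $\P_k(Z_n^{(1)}>0,Z_n^{(2)}>0\mid Z_n>0)\geq\epsilon^2\,\P_k(p(\f_n)\geq\epsilon\mid Z_n>0)$ and then invokes Proposition~\ref{deux}, but since Proposition~\ref{deux} is itself just the rate comparison from Proposition~\ref{equivalents}, the two routes are the same.

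For the (WS) part you are working much harder than necessary, and your sketch has a real gap: the ``sending $M\to\infty$ along a diagonal'' step does not obviously produce $\limsup_n\E[p(\f_n)\ind_{p(\f_n)<\epsilon}]/\E[p(\f_n)]\to0$; moreover any lower bound on $\P(p(\f_n)\geq c_0(M))$ of the kind you describe would go through the second half of Lemma~\ref{majpsur}, which needs the extra hypotheses $\alpha<1/2$ and $f''(1)/f'(1)$ bounded --- hypotheses Theorem~\ref{env} does not assume. The paper avoids all of this with a short trick. Write $\E[p(\f_n)\ind_{p(\f_n)<\epsilon}]=\P_1(p(\f_n)<\epsilon,\,Z_n>0)$, and for an \emph{auxiliary} integer $K$ use $\P_K(Z_n>0)\geq\int_0^\epsilon(1-(1-x)^K)\,\P(p(\f_n)\in\d x)$ together with the elementary estimate $|Kx-(1-(1-x)^K)|\leq Kx\,\sup_{y<\epsilon}|1-(1-(1-y)^K)/(Ky)|$. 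Dividing by $\P_1(Z_n>0)$ and letting $n\to\infty$ gives
\[
\limsup_{n\to\infty}\P_1\big(p(\f_n)<\epsilon\mid Z_n>0\big)\ \leq\ \frac{\alpha_K}{K}\ +\ \sup_{x\in[0,\epsilon)}\Big|\frac{1-(1-x)^K}{Kx}-1\Big|.
\]
Send $\epsilon\to0$ (fixed $K$) to kill the second term, then $K\to\infty$ to kill the first using $\alpha_K\leq M_+K^{\alpha}\log K$ from Theorem~\ref{asympta}. The statement for general $k$ follows from $k=1$ via $\P_k(p(\f_n)<\epsilon,\,Z_n>0)\leq k\,\P_1(p(\f_n)<\epsilon,\,Z_n>0)$ and $\P_k(Z_n>0)\geq\P_1(Z_n>0)$. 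Thus the only random-walk input required is already packaged in the bound $\alpha_K=o(K)$ from Theorem~\ref{asympta}; no new analysis is needed here.
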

$\newline$

This supercritical behavior in the (WS) case disappears as $k$
tends to infinity. That is, the survival probability of selected sequences of
environments tends to $0$ as the number of particles grows to
infinity.
\begin{Pte} \label{densi}
In the (WS) case, for every $\epsilon>0$,
$$\limsup_{n\rightarrow \infty} \P_k(p(\f_n)\geq \epsilon \ \vert \ Z_n>0)
\stackrel{k\rightarrow \infty}{\longrightarrow }0.$$
\end{Pte}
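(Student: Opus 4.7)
The plan is to combine an elementary Bayes-plus-Markov estimate with the (WS) asymptotic (\ref{equiv3}), together with the divergence $\alpha_k \to \infty$ already established in Theorem \ref{asympta}. Recall that $p(\f_n) = 1 - F_n(0)$ is the quenched survival probability starting from one particle in the environment $\f_n$, so that $\P_k(Z_n > 0 \mid \f_n) = 1 - (1 - p(\f_n))^k$ and $\E[p(\f_n)] = \P_1(Z_n > 0)$.

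The first step is to write the conditional probability of interest as
\begin{equation*}
\P_k(p(\f_n) \geq \epsilon \mid Z_n > 0) = \frac{\E\!\left[\bigl(1 - (1-p(\f_n))^k\bigr)\,\ind_{\{p(\f_n)\geq \epsilon\}}\right]}{\P_k(Z_n>0)},
\end{equation*}
and bound the numerator brutally by $\P(p(\f_n) \geq \epsilon)$ using $1 - (1-p(\f_n))^k \leq 1$. The second step is a Markov inequality on the nonnegative random variable $p(\f_n)$, which gives $\P(p(\f_n) \geq \epsilon) \leq \epsilon^{-1}\E[p(\f_n)] = \epsilon^{-1}\P_1(Z_n > 0)$.

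Combining the two estimates yields
\begin{equation*}
\P_k(p(\f_n) \geq \epsilon \mid Z_n > 0) \leq \frac{1}{\epsilon}\cdot\frac{\P_1(Z_n>0)}{\P_k(Z_n>0)}.
\end{equation*}
The (WS) asymptotic (\ref{equiv3}) shows that the ratio on the right-hand side tends to $1/\alpha_k$ as $n \to \infty$, so $\limsup_n$ of the conditional probability is at most $1/(\epsilon\alpha_k)$. The conclusion then follows from the part of Theorem \ref{asympta} asserting $\alpha_k \to \infty$ in the (WS) case.

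No real obstacle arises here: the argument boils down to a one-line Markov bound, with all the heavy lifting — the sharp asymptotic of $\P_k(Z_n>0)$ with the divergent factor $\alpha_k$ — already performed in Proposition \ref{equivalents} and Theorem \ref{asympta}. It is worth noting that the crude numerator bound $1 - (1-p(\f_n))^k \leq 1$ is not wasteful in the end, since $\alpha_k$ already absorbs the entire $k$-dependence and that alone delivers the vanishing of the $\limsup$.
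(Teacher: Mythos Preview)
Your proof is correct and arrives at exactly the same bound $1/(\epsilon\,\alpha_k)$ as the paper; the only cosmetic difference is that the paper rewrites the numerator via the change of measure $\P_1(p(\f_n)\in \d x \mid Z_n>0)=x\,\P(p(\f_n)\in\d x)/\P_1(Z_n>0)$ and then bounds $(1-(1-x)^k)/x\le 1/\epsilon$, whereas you bound $1-(1-x)^k\le 1$ first and then apply Markov to $p(\f_n)$ --- the two computations are line-for-line equivalent.
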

% On can naturally wonder if $\nu_k$ converges weakly as $k$ tends to infinity.
%$\newline$

In other words, conditionally on the survival of $Z_n$,
 the more initial particles there are, the less environments
need to be favorable to allow the survival of the population, and the less
likely it is for  a given particle  to survive. This explains  why letting
the number of initial particles tend to infinity does not make the
number of surviving initial particles
 tend to infinity, as stated in Theorem \ref{contrnb}.
$\newline$
%We can conjecture that $\nu_1(\d\u)/p(\u) \sim x^{-\alpha}$
\subsection{Yaglom quasistationary distributions}

We focus now on the Yaglom quasistationary distribution of
$(Z_n)_{n\in\N}$ (see Preliminaries  for existence and references).
For the GW process, this distribution does not depend on the
initial  number of particles and is characterized by  a functional
equation. This result still holds for (SS) BPRE. Indeed, starting
with several particles,  conditionally on the survival of one given
particle, the others become extinct (see Proposition \ref{deux}).
Recalling that in the (SS+IS) case, $\gamma=\E(f'(1))$, and
writing $p.g.f.$ for probability generating function,  we have the 
following statement.
\begin{Thm}
\label{quasista} 
For every $k\geq 1$,
the BPRE $Z_n$ starting from $k$ and conditioned to be positve
converges in distribution as $n\rightarrow \infty$ to a r.v. $\YY_k$, whose p.g.f.
 $G_k$ verifies
$$\E(G_k(f(s)))=\gamma G_k(s) + 1-\gamma \qquad (0\leq s\leq 1).$$
In the  (SS+IS) case,  the distribution of $\YY_k$ does not depend on $k$ . \\Moreover, in the (SS) case, 
the common p.g.f.  of $(\YY_k : k\geq 1)$ is the unique   p.g.f. $G$ 
  which satisfies the functional equation above and $G'(1)<\infty$.
\end{Thm}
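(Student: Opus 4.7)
The existence of the Yaglom limit $\YY_k$ follows from the $k=1$ result of \cite{bpree} extended to $k \geq 1$ via the modifications outlined in the Preliminaries. For the functional equation, I would exploit the distributional identity
$$F_{n+1}(s) \stackrel{d}{=} F_n(f(s))$$
with $f$ an independent copy of $f_0$; this holds since both $(f_0,\ldots,f_{n-1},f)$ and $(f_0,\ldots,f_n)$ are $(n+1)$-tuples of i.i.d.\ environments, so $F_n \circ f$ and $f_0 \circ \cdots \circ f_n$ have the same law. Taking $k$-th powers, expectations, and using $\E_k(t^{Z_n}) = \E(F_n(t)^k)$ conditionally on $f(s) = t$,
$$\E_k(1 - s^{Z_{n+1}}) = \E_f\bigl[\E_k(1 - f(s)^{Z_n})\bigr] = \P_k(Z_n > 0)\, \E\bigl(1 - G_k^{(n)}(f(s))\bigr),$$
where $G_k^{(n)}(t) := \E_k(t^{Z_n} \mid Z_n > 0)$. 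Dividing by $\P_k(Z_{n+1} > 0)$ and using $\P_k(Z_n > 0)/\P_k(Z_{n+1} > 0) \to 1/\gamma$ in each regime (a direct consequence of Proposition \ref{equivalents}), bounded convergence yields the announced equation $\gamma(1 - G_k(s)) = \E(1 - G_k(f(s)))$.

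For independence from $k$ in the (SS+IS) case, the key input is Proposition \ref{deux}: $\P_k(N_n > 1 \mid Z_n > 0) \to 0$. Splitting $\E_k(s^{Z_n}\mathbf{1}_{\{Z_n > 0\}})$ according to $\{N_n = 1\}$ and $\{N_n > 1\}$, the second contribution is $o(\P_k(Z_n > 0))$. By exchangeability of the subtrees $Z_n^{(i)}$ given the environment,
$$\E_k(s^{Z_n}\mathbf{1}_{\{N_n = 1\}}) = k\, \E\bigl[\E_1(s^{Z_n}\mathbf{1}_{\{Z_n > 0\}} \mid \mathbf{f})\, F_n(0)^{k-1}\bigr],$$
which differs from $k\, \E_1(s^{Z_n}\mathbf{1}_{\{Z_n > 0\}})$ by at most $k(k-1)\, \E[(1 - F_n(0))^2]$. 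The decisive second-moment bound $\E[(1 - F_n(0))^2] = o(\P_1(Z_n > 0))$ in (SS+IS) follows from the exact identity $\P_2(Z_n > 0) = 2\P_1(Z_n > 0) - \E[(1 - F_n(0))^2]$ together with $\alpha_2 = 2$ from Theorem \ref{asympta}. Normalizing by $\P_k(Z_n > 0) \sim k\, \P_1(Z_n > 0)$ then gives $\E_k(s^{Z_n} \mid Z_n > 0) \to G_1(s)$, so $G_k = G_1$.

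For uniqueness in the (SS) case, let $G$ be any p.g.f.\ solution with $G'(1) < \infty$ and $G(0) = 0$ (the implicit constraint that the Yaglom distribution is supported on $\N^*$), and set $\psi := G - G_1$. Iterating the functional equation gives $\E(\psi(F_n(s))) = \gamma^n \psi(s)$. Since $\psi(1) = 0$ and $\psi'(1)$ is finite, expand $\psi(t) = -\psi'(1)(1 - t) + (1 - t)\eta(t)$ with $\eta$ bounded and $\eta(t) \to 0$ as $t \to 1$. The (SS) Yaglom asymptotics give $\E(1 - F_n(s))/\gamma^n \to c(1 - G_1(s))$, so the linear term yields $-\psi'(1)c(1 - G_1(s))$ after normalization. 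The remainder $\E[(1 - F_n(s))\eta(F_n(s))]/\gamma^n$ vanishes because the probability measure $\mu_n$ on $[0,1]$ defined by $\mu_n(A) = \E((1 - F_n(s))\mathbf{1}_{\{F_n(s) \in A\}})/\E(1 - F_n(s))$ concentrates at $t = 1$: from
$$\mu_n([0,1-\delta]) \leq \frac{\E[(1 - F_n(s))^2]}{\delta\, \E[1 - F_n(s)]}$$
and $(1 - F_n(s))^2 \leq (1 - F_n(0))^2 = o(\gamma^n)$, one gets $\mu_n([0, 1-\delta]) \to 0$. This forces $\psi(s) = -\psi'(1) c (1 - G_1(s))$; setting $s = 0$ and using $G(0) = G_1(0) = 0$ gives $\psi'(1) = 0$, hence $\psi \equiv 0$. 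The main obstacle is precisely this Taylor-remainder control, which hinges on the same $\alpha_2 = 2$ estimate driving the second part.
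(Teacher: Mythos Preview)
Your treatment of the functional equation and of the independence from $k$ in the (SS+IS) case follows the paper closely: the paper also conditions on $Z_n$ and applies an independent environment for the $(n{+}1)$-th step, and for part (ii) bounds $|\P_2(Z_n=i) - 2\P_1(Z_n=i)|$ by a multiple of $\P_2(Z_n^{(1)}>0,\,Z_n^{(2)}>0)=\E[(1-F_n(0))^2]$, which is exactly your second-moment control via $\alpha_2=2$.

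Your uniqueness argument in the (SS) case, however, takes a genuinely different route. The paper differentiates the functional equation to obtain $\E(G'(f(s))f'(s)) = \E(f'(1))\,G'(s)$, forms the combination $H := G_2'(1)G_1' - G_1'(1)G_2'$, and proves a separate analytic lemma (Lemma~\ref{lemfonct}): any power series $H$ continuous on $[0,1]$ with $H(1)=0$ and satisfying $H(s)=\E(H(f(s))f'(s))/\E(f'(1))$ must vanish identically. That lemma is established by a maximum-principle argument on $[0,1]$ and makes no reference to the BPRE asymptotics. You instead iterate the undifferentiated equation to $\E(\psi(F_n(s)))=\gamma^n\psi(s)$ and feed in the (SS) Yaglom rate $\E(1-F_n(s))/\gamma^n \to c(1-G_1(s))$ together with the same second-moment bound $\E[(1-F_n(0))^2]=o(\gamma^n)$ that already drove part (ii). Your approach is economical in that it recycles the probabilistic estimate you needed anyway, but it is tied to the exact (SS) rate $\P_1(Z_n>0)\sim c\gamma^n$ with no polynomial prefactor; the paper's lemma is agnostic about the decay rate, the restriction to (SS) in the theorem entering only through the hypothesis $G'(1)<\infty$.
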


In the (WS) case, we leave open the question of determining whether the
quasistationary distribution $\YY_k$ depends on the initial number $k$
of particles. We know that for every $k\geq 1$,  $G_k$
verifies the same functional equation given above but we do not
know if the solution is unique. 
Moreover, other observations also lead us to believe
 that quasistationary distributions $\YY_k$ might not depend on $k$.
 For example, we can prove that if $Z_1 \in \{0,1,N\}$ for some $N\in \N^*$, then $\YY_1\stackrel{d}{=}\YY_N$. \\
%$\YY_1\stackrel{d}{=}\YY_2\stackrel{d}{=}...\stackrel{d}{=}\YY_k\stackrel{d}{=}...$\\ \\
%Finally, using $(\ref{contrnb})$,  we  should be able to  dominate   %$(\Upsilon_k)_{k\in\N^*}$ stochastically  by a r.v. $\Upsilon _0$  :
%$$\forall k \in \N^*, \Upsilon _k \leq_{stoch} \Upsilon _0.$$
%We can thus conjecture that there is also one single quasistationary distribution in the (WS) case, that is,
%$$\YY_1\stackrel{d}{=}\YY_2\stackrel{d}{=}...\stackrel{d}{=}\YY_k\stackrel{d}{=}...$$
$\newline$

\subsection{Q-process associated with a BPRE}

The Q-process $(Y_n)_{n\in\N}$ is  the BPRE $(Z_n)_{n\in\N}$ conditioned to survive in  the distant
future. See \cite{AN} for details in the case of  GW processes. In the (SS) case, the Q-process converges in distribution to the size
biased Yaglom distribution, as for GW process and finer results have been obtained in \cite{afa2}. In the
(IS+WS) case, the Q-process is transient. That is,  the
population needs to grow largely in the first generations so that it can  survive.\\

Recall that
 for all $l_1,l_2, ...,l_n \in \N,$ by
$$\P_k(Y_1=l_1, ..., Y_n=l_n)=\lim_{p\rightarrow \infty} \P_k(Z_1=l_1,...,Z_n=l_n \vert Z_{n+p}>0 ).$$

\begin{Pte}
\label{Qprocess}
$\star$ In the (SS) case, for every $k\in\N^*$, for all $l_1,l_2,...,l_n \in\N$,
$$\P_k(Y_1=l_1, ..., Y_n=l_n)=[\E(f'(1))]^{-n}\frac{l_n}{k}\P_k(Z_1=l_1,...,Z_n=l_n).$$
Moreover $(Y_n)_{n\in\N}$  converges in distribution to the size biased Yaglom distribution.
$$ \forall \ l \geq 0, \qquad \P_k(Y_n=l)\stackrel{n\rightarrow\infty}{\longrightarrow} \frac{l\P(\Upsilon=l)}{\E(\Upsilon)}.$$

$\star$ In the (IS) case, for every $k\in\N^*$, for all $l_1,l_2,...,l_n \in\N$,
$$\P_k(Y_1=l_1, ..., Y_n=l_n)=\E(f'(1))^{-n}\frac{l_n}{k}\P_k(Z_1=l_1,...,Z_n=l_n).$$
Moreover $Y_n\rightarrow\infty$ in probability as
$n\rightarrow\infty$.

$\star$ In the (WS) case, for every $k\in\N^*$, for all $l_1,l_2,...,l_n \in\N$,
$$\P_k(Y_1=l_1, ...,Y_n=l_n)=\gamma^{-n}\frac{\alpha _{l_n}}{\alpha_k}\P_k(Z_1=l_1,...,Z_n=l_n).$$
Moreover $Y_n$ tends to infinity  $a.s.$
%$$\P_k(Y_n=l)\stackrel{n\rightarrow\infty}{\longrightarrow} 0.$$
\end{Pte}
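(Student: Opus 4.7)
The plan is to compute the finite-dimensional distributions of the Q-process explicitly from the definition and then deduce the asymptotic behaviour of $Y_n$ from the resulting formulas. Applying the Markov property of $Z$ conditionally on the environment and then the i.i.d. structure of $(f_n)_{n\geq 0}$, one gets the factorisation
\[
\P_k(Z_1=l_1,\ldots,Z_n=l_n,\,Z_{n+p}>0)=\P_k(Z_1=l_1,\ldots,Z_n=l_n)\,\P_{l_n}(Z_p>0),
\]
since $(f_n,\ldots,f_{n+p-1})$ is distributed as $(f_0,\ldots,f_{p-1})$ and independent of $(f_0,\ldots,f_{n-1})$. Dividing by $\P_k(Z_{n+p}>0)$ and using the survival equivalents recalled in the Preliminaries, the ratio $\P_{l_n}(Z_p>0)/\P_k(Z_{n+p}>0)$ converges as $p\to\infty$ to $(\alpha_{l_n}/\alpha_k)\,\rho^{-n}$, where $\rho=\E(f'(1))$ in the (SS+IS) cases and $\rho=\gamma$ in the (WS) case; the polynomial prefactors $n^{-1/2}$ or $n^{-3/2}$ appearing in (IS) and (WS) contribute $((n+p)/p)^{\beta}\to 1$ and so do not affect the limit. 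Substituting $\alpha_k=k$ in the (SS+IS) cases from Theorem \ref{asympta} then yields the three formulas in the statement.

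For the asymptotics of $Y_n$, inject the Yaglom equivalent $\P_k(Z_n=l)\sim \P_k(Z_n>0)\,\P(\YY_k=l)$ into the explicit formula for $\P_k(Y_n=l)$. In the (SS) case this gives $\P_k(Y_n=l)\to c\,l\,\P(\YY=l)$, and the complementary computation $\E_k(Z_n\mid Z_n>0)\to \E(\YY)$ pins down $c=1/\E(\YY)$, producing the size-biased Yaglom law; finiteness of $\E(\YY)$ follows from the (SS) integrability assumption $\E(Z_1\log^+ Z_1)<\infty$. In the (IS) case the same substitution gives $\P_k(Y_n=l)=O(n^{-1/2})$ for each fixed $l$, so $\P_k(Y_n\leq M)\to 0$ for every $M$ and $Y_n\to\infty$ in probability. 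In the (WS) case it gives $\P_k(Y_n=l)=O(n^{-3/2})$, hence $\sum_n \P_k(Y_n\leq M)<\infty$ for every $M$, and the first Borel-Cantelli lemma applied on the Q-process measure yields $Y_n\to\infty$ almost surely.

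The principal difficulty is justifying the passage to the limit in $p$ in the Bayes-type ratio: one must control the error terms in the survival equivalents with enough uniformity in the starting population size $l_n$ that the limit of the ratio is the ratio of the equivalents. A secondary subtlety is fixing the normalising constant in the (SS) case without invoking a dominated-convergence argument on $(\P_k(Y_n=l))_{l\geq 1}$; the side computation of $\E_k(Z_n\mid Z_n>0)$ circumvents this. Finally, the almost sure divergence in (WS) uses that the explicit finite-dimensional formulas are Kolmogorov consistent, so that the Q-process is indeed a probability measure on $\N^{\N}$ to which the first Borel-Cantelli lemma applies.
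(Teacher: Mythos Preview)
Your argument is correct and follows the paper's proof essentially line by line: the Bayes-type factorisation $\P_k(Z_1=l_1,\ldots,Z_n=l_n)\,\P_{l_n}(Z_p>0)/\P_k(Z_{n+p}>0)$, the use of the survival asymptotics (\ref{equivun})--(\ref{equiv3}) together with $\alpha_l=l$ in (SS+IS), the rewriting $\P_k(Y_n=l)=\gamma^{-n}\P_k(Z_n>0)\,(\alpha_l/\alpha_k)\,\P_k(Z_n=l\mid Z_n>0)$ for the limit behaviour, and the Borel--Cantelli argument in (WS) are exactly what the paper does. Your stated ``principal difficulty'' about uniformity in $l_n$ is a non-issue, since $l_n$ is a single fixed integer when computing a given finite-dimensional distribution, so only the pointwise asymptotics for $\P_{l_n}(Z_p>0)$ and $\P_k(Z_{n+p}>0)$ are needed.
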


We  focus now on the environments of the Q-process. We endow $\F$
with distance $d$ given by the infinity norm
$$ d(f,g)=\parallel f-g\parallel _{\infty}$$
and we denote   by $\B(\F)$ the Borel $\sigma$-field. \\

We introduce the probability $\nu_k$ on  $(\F^{\N},\B(\F)^{\otimes
\N})$ which gives the distribution of the environments when  the
BPRE $(Z_n)_{n\in\N}$ starting from $k$ particles is  conditioned to
survive. Using Kolomogorov Theorem, it can be specified by its
projection on $(\F^{p},\B(\F)^{\otimes p})$ for every $p \in\N$,
denoted by $\nu_{k \ \vert \F^p}$, \bea \label{mesenv} \nu_{k \
\vert \F^p}(\d\i_p)&:=&\lim_{n\rightarrow\infty} \P_k\big( \f _p
 \in  \d \i_p \vert Z_{n+p}>0 \big) \\
&=& \gamma^{-p}\P\big(\f _p \in  \d
\i_p\big)\sum_{l=1}^{\infty} \P_k(Z_{\i _p}=l) \frac{\alpha_l}{\alpha _k}, \nonumber
\eea with
$\f_p=(f_0,\dots,f_{p-1})$ and $\gamma=\E(f'(1))$ in the (SS+IS) case.
The limit is the weak limit of probabilities  on
$(\F^{p},\B(\F)^{\otimes p})$ (see \cite{bill} for definition and Section
4.5 for the proof), which we endow with the distance $d_p$ given by
\be
\label{dp}
d_p\big((g_0,\dots,g_{p-1}), (h_0,\dots,h_{p-1}) \big) =\sup\{ \parallel  g_i-h_i\parallel _{\infty} : 0\leq i\leq p-1\}.
\ee

For every $\u \in \F^{\N}$, we denote by  $\u{\vert n}$ the first
$n$ coordinates of $\u \in \F^{\N}$ and we introduce the survival
probability in environment $\u \in \F^{\N}$ :

$$p(\u)= \lim_{n\rightarrow \infty}  \P(Z_{\u{\vert n}}>0).$$
$\newline$
One can naturally conjecture an analog of Theorem \ref{env} and Proposition \ref{densi}. That is, for every $k\in\N^*$,
\Bea
&&\t{In the (SS+IS) case,} \quad \nu_k(\{\u \in \F^{\N} : p(\u)=0\})=1. \qquad \qquad \qquad  \qquad \\
&&\t{In the (WS) case,} \quad \nu_k(\{\u \in \F^{\N} : p(\u)>0\})=1 \quad \ \t{and}
 \quad \ \nu_k(p(\f)\in \d x)\stackrel{k\rightarrow\infty}{\Longrightarrow} \delta_0(\d x).
\Eea
% manque une domination de \nu_k^{[n)}
A  perspective is to characterize the tree of particles when we
condition by the survival of particles, $i.e.$ the tree of particles
of the Q-process. Informally, for GW  process, this gives a spine
with finite iid subtrees (see \cite{Geig,Lyons}). This fact still
holds in the (SS+IS) case but we will observe a 'multispine tree' in
the (WS) case. $\newline$

\section{Proofs}
Recall that $\f_n=(f_0,...f_{n-1})$  and set for every $n \in \N$,
$$X_n:= \log(f_n'(1)),  \qquad S_n:=\sum_{i=0}^{n-1} X_i \quad (S_0=0),$$
$$L_n:=\t{min}\{S_i \  : \ 1\leq i \leq n\}.$$
To get limit theorems starting from $k$ particles,  we will work conditionally on the environments so that particles reproduce independently.  Thus, we need 
to   control the asymptotic distribution of $p(\f_n)=\P_1(Z_n>0 \ \vert \ \f_n)$. Roughly speaking, we prove now that $p(\f_n)\approx  \exp(L_n) \ a.s.$ as $n\rightarrow \infty$. The proof relies on the fact that in the fractional linear case, we can compute the survival probability at time $n$ as a function of the random walk $(S_i, 1\leq i\leq n)$ (see Preliminaries). We use then a  result on  random walk with negative drift conditioned to be above $x<0$ given in Appendix to get the lower bound in the linear fractional case. The lower bound in the general case follows by a coupling argument, whereas the upper bound is easy.

\begin{Lem} \label{majpsur}For every $n\in\N$, we have 
$$p(\f_n)\leq \exp(L_n) \quad a.s.$$
Moreover if $\E(f'(1)^{1/2} \log (f'(1)))>0$ ($i.e.$ $0<\alpha<1/2$) and $f''(1)/f'(1)$ is bounded, then there exists $\mu\geq 1$ such that for all $n\in \N$ and $x\in (0,1]$,
$$ \P(p(\f_n) \geq x) \geq  \P(L_n \geq \log ( \mu x ) )/4.
$$
\end{Lem}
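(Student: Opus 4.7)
The upper bound $p(\f_n) \leq e^{L_n}$ is the easy half and follows from a first-moment argument. Since $\{Z_n > 0\} \subseteq \{Z_i > 0\}$ for every $i \leq n$, Markov's inequality applied conditionally on the environment gives
$$p(\f_n) \;=\; \P_1(Z_n>0 \mid \f_n) \;\leq\; \P_1(Z_i>0 \mid \f_n) \;\leq\; \E_1(Z_i \mid \f_n) \;=\; e^{S_i} \qquad a.s.,$$
and it only remains to take the minimum over $1 \leq i \leq n$.

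For the lower bound the plan is to reduce to the linear fractional case via the coupling \ref{majavecun}. Replacing each $f_i$ by its linear fractional majorant $\w{f}_i$ (with $\w{f}_i'(1) = f_i'(1)$ and $\w{f}_i''(1) = 2 f_i''(1)$) leaves the walk $(S_i)$ unchanged and yields $p(\f_n) \geq \w{p}(\w{\f}_n)$ $a.s.$ Plugging $\w{\f}_n$ into the explicit formula \ref{explfc}, reindexing $j = n-i-1$, and using the hypothesis $f''(1)/f'(1) \leq C$, I obtain the deterministic bound
$$\w{p}(\w{\f}_n) \;\geq\; \frac{1}{e^{-S_n} \,+\, C \sum_{j=1}^{n} e^{-S_j}}.$$
It is then enough to show that with probability at least $\tfrac14 \, \P(L_n \geq \log(\mu x))$ this denominator is bounded by $1/x$.

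The event $\{L_n \geq \log(\mu x)\}$ already guarantees $e^{-S_n} \leq e^{-L_n} \leq (\mu x)^{-1}$, but the naive pointwise bound $\sum_{j} e^{-S_j} \leq n\, e^{-L_n}$ blows up with $n$ and is useless. The heart of the matter is to control the exponential functional $\sum_{j=1}^{n} e^{-S_j}$ in terms of $e^{-L_n}$, and this is the role I expect Lemma \ref{rw} from the Appendix to play: it should provide a constant $\mu'$ (depending only on the step distribution of $S$) such that
$$\P\!\left(L_n \geq \log(\mu x), \; \textstyle\sum_{j=1}^{n} e^{-S_j} \leq \mu' e^{-L_n}\right) \;\geq\; \tfrac14 \, \P(L_n \geq \log(\mu x)).$$
Once this is granted, on the good event the denominator is at most $(1 + C\mu')/(\mu x)$, so any choice $\mu \geq 1 + C\mu'$ forces $\w{p}(\w{\f}_n) \geq x$ and the lemma follows.

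The main obstacle is exactly this random-walk estimate. The intuition is that a random walk with negative drift conditioned to remain above a low barrier achieves its minimum near time $n$, so the terms $e^{-S_j}$ grow roughly geometrically in $j$ and their sum is comparable to the last one $e^{-L_n}$. Making this rigorous requires both a sharp Cram\'er-type handle on the rare event $\{L_n \geq a\}$ through the minimizer $\alpha$, and enough exponential integrability on the Cram\'er-tilted walk; the hypothesis $\alpha < 1/2$ is precisely what gives the necessary room, while the bound on $f''/f'$ is what allows one to collapse the environment-dependent coefficients in the sum into the single constant $C$. Everything else is routine manipulation of the linear fractional formula and of the coupling.
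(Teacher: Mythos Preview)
Your proposal is correct and follows essentially the same route as the paper: the upper bound via the first-moment/Markov argument, the lower bound via the linear fractional coupling and the explicit formula, and the reduction to a random-walk estimate controlling $\sum_j e^{-S_j}$ by a constant times $e^{-L_n}$ on an event of conditional probability at least $1/4$. The only cosmetic difference is that the paper rewrites the lower bound using the time-reversed partial sums $S_i'$ before arriving at the same inequality, and the precise appendix result invoked is Corollary~\ref{beta} (a consequence of Lemma~\ref{rw}) rather than Lemma~\ref{rw} itself; your constant $\mu'$ is exactly the paper's $\beta$.
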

\begin{proof}
For the upper bound, note that all $n\in\N$ and  $\u_n\in\F^{n}$,  we have,
$$p(\u _ n)=\P_1(Z_{\u _ n}>0)\leq \E_1(Z_{\u _ n})= \Pi_{i=0}^{n-1} g_i'(1).$$
Thus $p(\f_n)\leq e^{S_n}$ $a.s.$ Adding that $p(\f _ n)$ decreases $a.s.$ ensures that
$$p(\f_n)\leq e^{L_n} \quad a.s.$$

For the lower bound, use  (\ref{majavecun}) and (\ref{explfc}) to get
$$p(\f_n)\geq p(\w{\f}_n)= \frac{\w{P}_n}{1+\sum_{i=0}^{n-1} \frac{\w{f}_{n-i-1}''(1)}{2\w{f}_{n-i-1}'(1)} \w{P}_i}
=\frac{P_n}{1+\sum_{i=0}^{n-1} \frac{f_{n-i-1}''(1)}{f_{n-i-1}'(1)} P_i} \quad \t{a.s.},$$
where $P_i:=f_{n-i}'(1)...f_{n-1}'(1) \ (P_{0}=1)$.  Define  
$$S_i':=\log(f_{n-i}'(1))+...+\log(f_{n-1}'(1)) \quad  (1\leq i\leq n), \qquad S_0'=0.$$
Then $P_i=\exp(S_i')$ and assuming  that   $C:=\big(1+\t{ess sup}( \frac{f''(1)}{f'(1)})\big)^{-1}>0$, we have  
$$ p(\f _n)\geq C\frac{e^{S_n'}}{2\sum_{i=0}^{n-1} e^{S_i'}}\geq \frac{C}{2}  \frac{e^{S_{n}'-\t{max}\{S_j' : 0\leq j\leq n\}}}{\sum_{i=0}^{n}
e^{S_i'-\t{max}\{S_j' : 0\leq j\leq n\}}} \ \ \t{a.s.}$$
Thus,
\be
\label{lienrw}
  p(\f _n)\geq \frac{C}{2}
\frac{e^{L_{n}}}{\sum_{i=0}^{n} e^{L_{n}-S_i}}. 
\ee
As $\alpha<1/2$, the  forthcoming Corollary \ref{beta} in Appendix (Section \ref{sectrw}) ensures  that there exists $\beta>0$ such that
for all $n\in\N$ and $x\in (0,1]$,
\Bea \P(p(\f_n) \geq x) &\geq & \P(L_{n} \geq  \log (2\beta x  /C) )
\P\big( \sum_{i=0}^{n} e^{L_{n}-S_i} \leq \beta \ \vert \  L_{n} \geq
\log (2\beta x  /C) \big)  \\
&\geq & \P(L_n \geq \log ( \mu x ) )/4,
\Eea
writing $\mu = \min(1,2 \beta/C)$. 
\end{proof}
$\newline$

\subsection{Proofs of Section 3.1}
First we give the proof of  Proposition  \ref{equivalents}, which is split into three parts. It follows the proof of Theorem 1.2 in \cite{Guiv}. Using also the Lemma above, we are then able to  prove Theorem \ref{asympta}.
$\newline$
\begin{proof}[Proof of Proposition \ref{equivalents} (i)]
We  follow the proof of Theorem 1.2 (a) in \cite{Guiv} and introduce the probability
$\w{\P}$ such that under $\w{\P}$, the environments still are iid and their law is given by
$$\w{\P}(f \in \d g)=\E(f'(1)^k)^{-1}g'(1)^{k} \P(f\in \d g).$$
Then, writing $P_n=f_{0}'(1)...f_{n-1}'(1)$ ($P_{0}=1$), we have
$$\P(Z_n^{(1)}>0,..., Z_n^{(k)}>0)=\E((1-F_n(0))^k)=\E(f'(1)^k)^n\w{\E}((1-F_n(0))/P_n)^k).$$
As  $\E(f'(1)^k\t{log}(f'(1)))<0$, then $\w{\E}(\t{log}( f'(1)))<0$ and
Theorem 5 in \cite{at} ensures that
$$C=\lim_{n\rightarrow\infty} \frac{1-F_n(0)}{P_n}$$
exists $\w{\P}$ $a.s.$ and belongs to $]0,1]$.
Thus, as $n\rightarrow \infty$,
$$ \P_k(Z_n^{(1)}>0,..., Z_n^{(k)}>0) \sim \E(f'(1)^k)^n  \w{\E}(C^k).$$
Add that $s\mapsto \E(f'(1)^s)$ decreases for $s\in [0,\alpha]$ and $k<\alpha$ to complete the proof, where  $\alpha$
is given by $(\ref{alpha})$.
\end{proof}
$\newline$

\begin{proof}[Proof of Proposition \ref{equivalents} (iii)]
We follow the proof of Theorem 1.2 (c) in \cite{Guiv}.\\  \\
STEP 1. First we consider the linear fractional case. In that case, by  (\ref{lfcm}),
%conditionally on $(f_0, ...,f_{n-1})$, ($Z_n^{(i)}, \ i\geq 1$) is an iid sequence, we  get
$$\P_k(Z_n^{(1)}>0,...,Z_n^{(k)}>0 \vert \ f_0,..., \ f_{n-1})=
\bigg(1+\sum_{i=0}^{n-1} \frac{f_{n-i-1}''(1)}{2f_{n-i-1}'(1)}P_i \bigg)^{-k}P_n^k.$$
Define $\w{\gamma}$  by
$$\w{\gamma}=\inf_{s\in\RRR^+}\big\{ \E\big(f'(1)^s \big) \big\}=
\E\big(f'(1)^{\w{\alpha}} \big),$$
where  $0<\w{\alpha}<k$ since $\E(f'(1)^k\t{log}(f'(1)))>0$.
Let $\P _{\w{\alpha}}$ be the probability given by
$$\P_{\w{\alpha}}(f \in \d g)=\w{\gamma}^{-1}g'(1)^{\w{\alpha}} \P(f \in \d g).$$
Then
$$\P_k(Z_n^{(1)}>0,..., Z_n^{(k)}>0)=\w{\gamma}^n \E_{\w{\alpha}}\bigg[
\bigg(1+\sum_{i=0}^{n-1} \frac{f_i''(1)}{2f_i'(1)}P_i \bigg)^{-k}P_n^{k-\w{\alpha}}\bigg].$$
As $\E_{\w{\alpha} }(\t{log}(f'(1)))=0$, we apply Theorem 2.1 in \cite{Guiv} with
$$\phi (x)=x^{k-\w{\alpha}}, \quad \psi (x)=(1+x)^{-k}, \quad 0<k-\w{\alpha}<k,$$
so there exists $c_k>0$ such that, as $n\rightarrow \infty$,
$$\P_k(Z_n^{(1)}>0,..., Z_n^{(k)}>0) \sim c_k \w{\gamma}^n n^{-3/2}.$$

STEP 2. For  the general case,  we can use STEP 1.
Indeed, by  (\ref{couplage}), there exists a  BPRE $(\w{Z}_n)_{n\in\N}$ such that  $\w{f}$ is $a.s.$ linear fractional,
 $\w{f}'(1)=f'(1)$ and
 $$\P_k(Z^{(1)}_n>0, \ Z^{(2)}_n>0, \ ..., \ Z^{(k)}_n>0)\geq \P_k(\w{Z}^{(1)}_n>0, \
\w{Z}^{(2)}_n>0, \ ..., \ \w{Z}^{(k)}_n>0).$$
By STEP 1, this yields the existence of $c_k(1)>0$ such that \be
\label{pos} \P_k(Z^{(1)}_n>0, \ Z^{(2)}_n>0, \ ..., \
Z^{(k)}_n>0)\geq c_k(1)\gamma^n n^{-3/2}. \ee

Note  that by inclusion-exclusion principle, we have
\be
\label{binom}
\P_k(Z_n>0)=\sum_{i=1}^k (-1)^{i+1}\binom{k}{i}\P(Z_n^{(1)}>0, ...,Z_n^{(i)}>0).
\ee
Moreover, (\ref{equiv3}) ensures the convergence of $ \gamma^{-n}n^{3/2}\P_1(Z_n>0)$
to $c\alpha_1$. By induction, it  gives  the convergence of
$$\gamma^{-n} n^{3/2} \P(Z^{(1)}_n>0, \ Z^{(2)}_n>0, \ ..., \ Z^{(k)}_n>0).$$
to a constant $c_k$, which is positive by (\ref{pos}). \\

To complete the proof note that $\gamma=\w{\gamma}$ iff $[\E(f'(1)^s) ]'(1)\geq 0$, $i.e.$ in the (IS+WS) case.
\end{proof}
$\newline$
\begin{proof}[Proof of Proposition \ref{equivalents} (ii)]
The proof is close to the previous one. First, we consider the linear fractional case
and
%$$\P(Z_n>0 \vert Z_0=1, \ f_0,..., \ f_{n-1})=1-F_n(0)=
%\bigg(1+\sum_{i=0}^{n-1} \frac{f_{n-i-1}''(1)}{2f_{n-i-1}'(1)}P_i \bigg)^{-1}P_n.$$
 the probability
$\w{\P}$ defined by
$$\w{\P}(f \in \d g)=\E(f'(1)^k)^{-1}g'(1)^k \P(f\in \d g).$$
Using again (\ref{lfcm}), we get then
$$\P(Z_n^{(1)}>0,..., Z_n^{(k)}>0)=\E(f'(1)^k)^n \w{\E}
\bigg[\bigg(1+\sum_{i=0}^{n-1} \frac{f_{n-i-1}''(1)}{2f_{i-i-1}'(1)}P_i \bigg)^{-k}\bigg].$$
As $\w{\E} (\log(f'(1))=0$, we can  use  again Theorem 2.1 in \cite{Guiv} and conclude in the linear fractional
case. \\

The general case can be proved following STEP 2 in the previous proof.
\end{proof}
$\newline$

%Recalling (\ref{defnun}) and that for every $p\in\N$,
%$$\nu_{1 \ \vert \F^{p}}^{(n)} \stackrel{n\rightarrow\infty}{\Longrightarrow} \nu_{1 \vert \F^{p}},$$
%we have
%\Bea
%\frac{\nu_1(\d\u)}{p(\u)}(p(\u ) \in \d x)&=& \frac{\nu_1(p(\u) \in \d x)}{x} \\
%&=& \lim_{n\rightarrow \infty} \frac{\P (p^{(n)}(\u) \in dx \ \vert \ Z_n>0 )}{x} \\
%&=& \lim_{n\rightarrow \infty} \frac{\P (p^{(n)}(\u ) \in dx \ \vert \ Z_n>0 )}{x} \\
%&=& \lim_{n\rightarrow \infty} \frac{\P (p^{(n)}(\u) \in dx, \ Z_n>0)}{\P(Z_n>0) x}\\
%&=& \lim_{n\rightarrow \infty} \frac{\P (p^{(n)}(\u) \in dx)}{\P(Z_n>0)} \\
%&\sim & \lim_{n\rightarrow \infty} \frac{\P (e^{L_n} \in dx)}{\P(Z_n>0)} \qquad (x\rightarrow 0) \\
%&\sim &  x^{-\alpha-1}\emph{d} x \qquad (x\rightarrow 0)
%\Eea
%using (\ref{asymptl}) and the first part of the lemma.

\begin{proof}[Proof of Theorem \ref{asympta}]
\underline{Computation of $\alpha_k$ in the (SS+IS) case.}
In the (SS+IS) case, Proposition \ref{deux} and (\ref{binom}) ensure that for every $k\in\N$,
$$\P_k(Z_n>0)\sim k\P_1(Z_n>0), \qquad (n\rightarrow \infty).$$
Then, $\alpha_k=k,$ which gives the first result. \\
$\newline$

\underline{Limit of $\alpha_k$ in the (WS) case.}
Note that $\P_1(Z_{p+n}>0)=\sum_{k=1}^{\infty} \P_1(Z_p=k)\P_k(Z_n>0)$. Then, \be
\label{equal}
 \frac{\P_1(Z_{p+n}>0)}{\P_1(Z_n>0)}=\sum_{k=1}^{\infty}
\P_1(Z_p=k)\frac{\P_k(Z_n>0)}{\P_1(Z_n>0)}.
\ee
First, $\P_k(\cup_{i=1}^k\{Z_n^{(i)}>0\})\leq \sum_{i=1}^k \P_k(Z_n^{(i)}>0)$, which gives
$$\P_k(Z_n>0)/\P_1(Z_n>0)\leq k.$$
Moreover  $\sum_{k=1}^{\infty} \P_1(Z_p=k)k= \E(Z_p)<\infty$ and $\P_k(Z_n>0)/\P_1(Z_n>0)\stackrel{n\rightarrow \infty}{\longrightarrow } \alpha_k,$ so by bounded convergence, we get
$$\sum_{k=1}^{\infty}
\P_1(Z_p=k)\frac{\P_k(Z_n>0)}{\P_1(Z_n>0)}  \stackrel{n\rightarrow \infty}{\longrightarrow } \sum_{k=1}^{\infty}
\P_1(Z_p=k)\alpha_k. $$
Then, using again (\ref{equiv3}),  letting $n \rightarrow \infty$ in $(\ref{equal})$ yields
$$ \gamma^{p}=  \sum_{k=1}^{\infty} \P_1(Z_p=k)\alpha_k.$$
Assuming that $(\alpha_k)_{k \in\N}$ is bounded by $A$ leads to
$$\gamma^p\leq A \P_1(Z_p>0).$$
Letting  $p\rightarrow \infty$ leads to a contradiction with  $(\ref{equiv3})$. Adding that $\alpha_k$ increases ensures
that $\alpha_k\rightarrow \infty$ as  $k \rightarrow \infty.$  \\ \\

\underline{Upper bound  of $\alpha_k$ in the (WS) case.} Using  the independence of the particles conditionally on the environments, we have 
$$\P_k(Z_n>0 \ \vert \ \f_n)=1-\P_1(Z_n=0 \ \vert \ \f_n)^k=1-(1-p(\f_n))^k.$$
This yields the following expressions for the survival probability starting from $k$ particles,
\bea
\P_k(Z_n>0)&=& \E(1-(1-p(\f_n))^k) 
= \label{Psurk}
k\int_{0}^1 (1-x)^{k-1} \P (p(\f_n) \geq  x) \d x.
\eea
So we 
can write
\bea
\alpha _k &=& 
%\lim_{n\rightarrow \infty} \P_k(Z_n>0)/\P_1(Z_n>0)  \nonumber \\
\label{alph}
  \lim_{n\rightarrow \infty}  k\int_{0}^1 (1-x)^{k-1} \frac{\P (p(\f_n) \geq  x)}{\P_1( Z_n>0)} \d x.
\eea
Using the first inequality of Lemma \ref{majpsur}, we have then
\Bea
\alpha_k &\leq &\limsup_{n\rightarrow \infty} k\int_0^1 (1-x)^{k-1}  \frac{\P (\exp(L_n) \geq x)}{\P_1( Z_n>0)} \d x \\
&\leq  & k. \limsup_{n\rightarrow\infty}
  \frac{n^{-3/2}\gamma^n}{\P_1(Z_n>0)}. \limsup_{n\rightarrow\infty}\int_0^1 (1-x)^{k-1}  \frac{\P (\exp(L_n) \geq x)}{n^{-3/2}\gamma^n}.
\Eea
By (\ref{minmaj}), we can use Fatou's Lemma and (\ref{min}) ensures that exists a linearly growing function $u$ 
such that
$$\alpha_k\leq
k\limsup_{n\rightarrow\infty}
  \frac{n^{-3/2}\gamma^n}{\P_1(Z_n>0)}.\int_{0}^1 (1-x)^{k-1}  x^{-\alpha} u(\log(1/x))\d x.$$
  Thus, using (\ref{equiv3}) and the fact that $u$ is linearly growing, there exists a constant $C>0$ such that
\be
\label{unomajalpha}
\alpha_k\leq C k\int_{0}^1 (1-x)^{k-1}x^{-\alpha}[1+ \log(1/x)] \d x.
\ee
Finally, splitting the integral at $1/k$ and using integration by  parts,
\Bea
\int_0^1  (1-x)^{k-1}x^{-\alpha}\log(1/x) \d x 
% &=&
% \int_0^{1/k} (1-x)^{k-1}\log(1/x) x^{-\alpha} \d x+
% \int_{1/k}^{1} (1-x)^{k-1}\log(1/x) x^{-\alpha} \d x\\
&\leq   &   \int_0^{1/k} x^{-\alpha} \log (1/x) \d x
+ k^{\alpha}\log(k) \int_{1/k}^{1} (1-x)^{k-1}  \d x \\
&\leq &[-\alpha+1]^{-1}\big( k^{\alpha-1}\log(k)+[-\alpha+1]^{-1} k^{\alpha-1}\big) +
 k^{\alpha-1}\log(k). 
%&\leq & M (1+\log(k))k^{\alpha},
\Eea
Similarly $\int_{0}^1 (1-x)^{k-1}x^{-\alpha} \d x\leq [1-\alpha]^{-1}k^{\alpha-1}+k^{\alpha-1}$. Then (\ref{unomajalpha}) ensures that
there exists $M_+>0$ such that for every $k>0$, $\alpha_k\leq M_{+}k^{\alpha}\log(k).$  \\

\underline{Lower bound of $\alpha_k$ in the (WS) case} assuming further $\E(f'^{1/2}(1) \log (f'(1)))>0$ ($i.e.$ $\alpha<1/2$)
and $f''(1)/f'(1)$ is bounded. \\

By (\ref{equiv3}) and the second part of Lemma \ref{majpsur}, there exists $\mu\geq 1$ such that for every $x\in (0,1]$,
$$ \liminf_{n\rightarrow \infty} \frac{\P(p(\f_n) \geq
x)}{\P_1(Z_n>0)} =\liminf_{n\rightarrow \infty} \frac{\gamma^n n^{-3/2}}{\P_1(Z_n>0)} \frac{\P(p(\f_n) \geq x)}{\gamma^n n^{-3/2}}\geq  c^{-1}\liminf_{n\rightarrow \infty} \frac{\P(L_n\geq \log(\mu x))}{\gamma^n n^{-3/2}}.$$
Using $(\ref{min})$ and the fact that $u$ grows linearly, there exists $D>0$ such that 
$$ \liminf_{n\rightarrow \infty} \frac{\P(p(\f_n) \geq
x)}{\P_1(Z_n>0)} \geq Dx^{-\alpha}\log(1/[x\mu]).$$
%&\geq & a\gamma^{-1} e^{-\alpha \log(x\beta \beta'^{-1})}u(\log(x\beta \beta'^{-1})) \\
%&\geq & b_{\delta}x^{-\delta},
%for some constant $b_{\delta}$.
By (\ref{alph}) and  Fatou's Lemma, 
\Bea
\alpha_k &\geq & D \int_0^{1} (1-x)^{k-1}x^{-\alpha}[\log(1/x)+\log(1/\mu)] \d x 
%& \geq &  D\int_0^{1/k} (1-(1-x)^k)[\log(1/x) - \log(\mu)]x^{-\alpha} \d x.
\Eea
For all $k\geq \mu^2$ and $x\in (0,1/k]$, $\log(1/x)\geq 2\log(\mu)$. So  for every $k\geq \mu^2$,
\Bea
\alpha_k &\geq &  
2^{-1}Dk\int_0^{1/k} (1-x)^{k-1} x^{-\alpha}\log(1/x) \d x  \\
&\geq & 2^{-1}D k\log(k) \int_0^{1/k}  x^{-\alpha} \d x,
\Eea
which ensures that there exists $M_{-}$ such that for every $k\geq 1$, $\alpha_k\geq M_{-}k^{\alpha}\log(k).$
%In the general case, the integrability assumptions in the (WS) case (see Introduction)
% ensure that
% $$\E(f''(1)/f'(1))<\infty.$$
% Then  for every $\theta>0$,
% $$\sum_{k\geq  1} \P(f''(1)/f'(1)\geq e^{k\theta})<\infty.$$
% Thus (supposer f_i'' pas haut sur f_i' bas) ????,
%  $$\sum_{k\geq  1} \P(f''_i(1)/f'_i(1)\geq e^{k\theta} \ \ vert \ L_n\geq -x)<\infty.$$
% We can then prove the analog of Corollary \ref{beta} for the quantity
% $$\sum_{i=0}^{n-1} \frac{f_{i}''(1)}{f_{i}'(1)}e^{L_n'-S_i'}.$$
% This completes the proof.
\end{proof}

\subsection{Proofs of Section 3.2}
\begin{proof}[Proof of Proposition \ref{deux}]
The first part ($i.e.$  the (SS+IS) case) follows
from
$$
\P_k(\exists i\ne j, \ 1\leq i,j\leq k, \ Z_n^{(i)}>0, \ Z_n^{(j)}>0 \ \vert  Z_n>0)
\leq \binom{k}{2} \frac{\P_2(Z_n^{(1)}>0, \ Z_n^{(2)}>0)}{\P_k(Z_n>0)}, $$
the asymptotics given by Proposition \ref{equivalents} (i-ii-iii) and equations
(\ref{equivun}) and (\ref{equivdeux}). The second  part
($i.e.$  the (WS) case) is directly derived from Proposition \ref{equivalents} (iii) and (\ref{equiv3}).
\end{proof}
$\newline$
\begin{proof}[Proof of Theorem \ref{contrnb}]
Denote by $N(\u_n)$ the number of initial particles which
survive until generation $n$ where the successive
reproduction laws are given by $\u_n$ ($i.e.$ conditionally on $\f_n=\u_n$). Then, for
all $1\leq l\leq k$,
\Bea
\P_k(N_n= l) &= &\int_{\F^n} \P(\f_n\in \d \u_n) \P_k(N(\u_n)=l)\\
&=&
\int_0^1 \P(p(\f_n) \in \d x) \binom{k}{l} x^l(1-x)^{k-l}.
%&\leq & \int_0^1 \P(e^{L_n}  \in \d x) \binom{k}{l} x^l(1-x)^{k-l}.
%&\leq & c/(1-\theta) \gamma^n n^{-3/2}\int_0^1 x^{\theta-1} \binom{k}{l} x^l(1-x)^{k-l}
\Eea Note that  $x \in [0,1] \mapsto  x^l(1-x)^{k-l}$ is positive,
increases on $[0,l/k]$ and decreases on $[l/k,1]$. \\

First, we prove
the upper bound. By Lemma  \ref{majpsur}, $p(\f_n)\leq \exp(L_n) \  a.s.$, so that
\Bea
&&\int_0^{1} \P(p(\f_n) \in \d x) x^l(1-x)^{k-l} \\
&=&\int_0^{1} \P(p(\f_n) \in \d x, \ \exp(L_n)\leq l/k) x^l(1-x)^{k-l} +
\int_0^{1} \P(p(\f_n) \in \d x, \ \exp(L_n)>l/k) x^l(1-x)^{k-l}  \\
&\leq & \int_0^{1} \P(\exp(L_n) \in \d x)  x^l(1-x)^{k-l} + \P(\exp(L_n) \in (l/k,1]) (l/k)^l(1-l/k)^{k-l}.
\Eea
By (\ref{min}),
$$\limsup_{n\rightarrow \infty}
 \frac{\P(\exp(L_n) \in (l/k,1])}{\gamma^n n^{-3/2}}\leq u(\log(k/l))(k/l)^{\alpha}.$$
Second, using again the variations of $x \in [0,1] \mapsto  x^l(1-x)^{k-l}$ and $(\ref{minmaj})$, we get
\Bea
&&\lim_{n\rightarrow \infty} \int_0^{1} \frac{\P(\exp(L_n) \in \d x)}{n^{-3/2}\gamma ^n}
  x^l(1-x)^{k-l} \\
&\leq &
\int_0^{l/k} \nu_+(\d x)  x^l(1-x)^{k-l} + \nu _+([l/k,1])(l/k)^l(1-l/k)^{k-l} \\
&\leq & c_+ \int_0^1 \log(1/x)x^{-\alpha-1} x^l(1-x)^{k-l} \d x + c_+(1+\int_{l/k}^1
\log(1/x)x^{-\alpha-1} \d x )(l/k)^l(1-l/k)^{k-l} \\
&\leq & c_+ \int_0^1 \log(1/x)x^{-\alpha-1} x^l(1-x)^{k-l} \d x + c_+\big(1+\log(k/l)
\frac{(k/l)^{\alpha}-1}{\alpha}\big)(l/k)^l(1-l/k)^{k-l}.
\Eea

Putting the three last inequalities together  and using $u(\log(k/l))\leq C(1+\log(k/l))$ for some $C>0$ ensures that
there exists $D>0$ such that
\Bea
&& \limsup_{n\rightarrow \infty}
\int_0^{1} \frac{\P(p(\f_n) \in \d x)}{n^{-3/2}\gamma ^n} x^l(1-x)^{k-l} \\
&\leq & c_+ \int_0^1 \log(1/x)x^{-\alpha-1} x^l(1-x)^{k-l} \d x + D(1+\log(k/l)(k/l)^{\alpha})(l/k)^l(1-l/k)^{k-l}.
\Eea
Moreover, denoting by  $B$  the Beta function, we have
\Bea
&&\int_0^1 \log(x)x^{-\alpha-1} x^l(1-x)^{k-l} \d x \\
&=& \int_0^{1/k}  \log(1/x) x^{l-\alpha-1}(1-x)^{k-l}\d x
+\int_{1/k}^1  \log(1/x) x^{l-\alpha-1}(1-x)^{k-l}\d x \\
&\leq & \int_0^{1/k}  \log(1/x) x^{l-\alpha-1}\d x + \log(k) \int_{1/k}^1  x^{l-\alpha-1}(1-x)^{k-l}\d x \\
&\leq & (l-\alpha)^{-1}\big[
 \log(k)k^{\alpha-l}+(l-\alpha)^{-1} k^{\alpha-l}\big] +  \log(k) B(l-\alpha,k-l+1),
\Eea
 by integration by parts. By Stirling's formula,  there exists
$C>0$, and then $C',C''>0$ such that for all $1\leq l\leq k$,
 \bea
 \binom{k}{l}k^{-\alpha} B(l-\alpha,k-l+1) &\leq & C
\frac{k^{k-\alpha+1/2}}{l^{l+1/2}(k-l)^{k-l+1/2}}
\frac{(l-\alpha)^{l-\alpha-1/2}(k-l+1)^{k-l+1/2}}{(k-\alpha+1)^{k-\alpha+1/2}} \nonumber \\
 &\leq &  C'\frac{(l-\alpha)^{l-\alpha-1/2}(k-l+1)^{k-l+1/2}}{l^{l+1/2}(k-l)^{k-l+1/2}} \nonumber  \\
\label{stir1}
 &\leq & C'' \frac{1}{ l^{1+\alpha}},
 \eea
where the last inequality comes from the fact that $(1/x+1/2)\log(1+x)$ is bounded for  $x \in [0,1]$, so that $(k-l+1/2)\log(1+1/(k-l))$ is bounded for $1\leq l < k$.

 Then, combining the three last inequalities gives \Bea
\limsup_{n\rightarrow \infty} \frac{\P_k(N_n=
l)}{k^{\alpha}\log(k)n^{-3/2}\gamma ^n}  &\leq &
\limsup_{n\rightarrow \infty} \frac{\binom{k}{l}}{k^{\alpha}\log(k)} \int_0^{1} \frac{\P(\exp(L_n) \in \d x)}{n^{-3/2}\gamma ^n}   x^l(1-x)^{k-l} \\
&\leq&
(l-\alpha)^{-1}\big[\binom{k}{l}k^{-l}+(l-\alpha)^{-1}
k^{-l}/\log(k)+ C'' \frac{1}{ l^{1+\alpha}} \big] \\
 &&+D\binom{k}{l}(k^{-\alpha}/\log(k)+l^{-\alpha})(l/k)^l(1-l/k)^{k-l}. \Eea
Adding that \be \label{stir2} \binom{k}{l} k^{-l}\leq 
\frac{1}{l!}, \ee  there exists $D'>0$ such that
$$\limsup_{n\rightarrow \infty} \frac{\P_k(N_n= l)}{k^{\alpha}\log(k)n^{-3/2}\gamma ^n}\leq
D'\big[ \frac{1}{ l^{1+\alpha}}+ \frac{1}{l!}
+\binom{k}{l}l^{-\alpha}(l/k)^l(1-l/k)^{k-l}\big]. $$ Then, \Bea
\limsup_{n\rightarrow\infty} \frac{\P_k(N_n\leq 
l)}{k^{\alpha}\log(k)n^{-3/2}\gamma ^n}
&=&\limsup_{n\rightarrow\infty} \sum_{l'=l}^k \frac{\P_k(N_n=
l')}{k^{\alpha}\log(k)n^{-3/2}\gamma ^n} \\
&=& \sum_{l'=l}^k \limsup_{n\rightarrow\infty}  \frac{\P_k(N_n=
l')}{k^{\alpha}\log(k)n^{-3/2}\gamma ^n} \\
&\leq & D \sum_{l'=l}^k \big[\frac{1}{ l'^{1+\alpha}}+ \frac{1}{l'!}
+\binom{k}{l'}l'^{-\alpha}(l'/k)^{l'}(1-l'/k)^{k-l'}\big] \\
&\leq&  D \bigg[\sum_{l'=l}^k \big[ \frac{1}{ l'^{1+\alpha}}+
\frac{1}{l'!}\big] + l'^{-\alpha}\bigg]
 \Eea
Recalling that $\P_k(Z_n>0)\sim c\alpha_k n^{-3/2}\gamma^n, \
(n\rightarrow\infty)$ and $\alpha_k\geq M_{\_} \log(k)k^{\alpha}, \
(k\in \N)$ (see Theorem \ref{asympta}), we have 
$$\limsup_{n\rightarrow\infty} \P_k(N_n\geq  l \ \vert \ Z_n>0)=\limsup_{n\rightarrow\infty} \frac{\P_k(N_n\geq  l)}{c\alpha_k n^{-3/2}\gamma ^n}
\leq (cM_{\_})^{-1}D \bigg[\sum_{l'=l}^k \big[ \frac{1}{ l^{1+\alpha}}+
\frac{1}{l!}\big] + l^{-\alpha}\bigg].$$
This gives the first inequality of the proposition with $A_l=(cM_{\_})^{-1}D \big[\sum_{l'=l}^{\infty} \big[ \frac{1}{ l^{1+\alpha}}+
\frac{1}{l!}\big] + l^{-\alpha}\big]$.
$\newline$

We can prove similarly the lower bound. By Lemma \ref{majpsur}, for
every $x>0$,
 $$\P(p(\f_n) \geq x) \geq \P(L_n \geq \log (x\mu))/4.$$
Then, using also $(\ref{majpsur})$, for all $0\leq  l <k $ and  $N>0$,
\Bea
\P(p(\f_n) \in [l/k,Nl/k[)&=&\P(p(\f_n)\geq l/k)-
\P(p(\f_n)\geq Nl/k) \\
&\geq &\P(L_n \geq \log (\mu l/k))/4 - \P(\exp(L_n)\geq Nl/k).
\Eea
By (\ref{min}) , we get
$$\liminf_{n\rightarrow \infty} \frac{\P(p(\f_n) \in [l/k,Nl/k[)}{n^{-3/2}\gamma^n} \geq (k/l)^{\alpha}[\mu^{-\alpha}u(\log (k)-\log(\mu l))/4 - N^{-\alpha} u(\log (k)-\log(Nl))].$$
Then, as $u$ is linearly growing, we can fix $N\geq 1$ so  that there exists $C>0$ such that
\be
\label{minorr}
\liminf_{k\rightarrow \infty}\liminf_{n\rightarrow \infty} 
\frac{\P(p(\f_n) \in [l/k,Nl/k[)}{k^{\alpha}\log(k)n^{-3/2}\gamma^n}\geq
l^{-\alpha}C.
\ee
Using that $$\P_k(N_n= l)= \int_0^1 \P(p(\f_n) \in \d x) \binom{k}{l} x^l(1-x)^{k-l},$$
and $x\rightarrow x^l(1-x)^{k-l}$ decreases on $[l/k,1]$, we have, for every $k\geq Nl$,
$$\P_k(N_n= l)\geq \P(p(\f_n) \in [l/k,Nl/k[)\binom{k}{l}(Nl/k)^l(1-Nl/k)^{k-l}.$$
Then $(\ref{minorr})$ and $\lim_{k\rightarrow \infty} \binom{k}{l}(Nl/k)^l(1-Nl/k)^{k-l}>0$ ensures that
$$\liminf_{k\rightarrow \infty}\liminf_{n\rightarrow\infty} \frac{\P_k(N_n=  l)}{k^{\alpha}\log(k) n^{-3/2}\gamma ^n}>0.$$
Use $\P_k(Z_n>0)\sim c \alpha_k n^{-3/2}\gamma^n$ and  the upper bound on $\alpha_k$ given in Theorem \ref{asympta} to conclude.
\end{proof}

\subsection{Proofs of Section 3.3}
\begin{proof}[Proof of Theorem \ref{env}]
Let us first consider the (WS+IS) case. Using that conditionally on $\f _n$,  $Z_n^{(1)}$ and $Z_n^{(2)}$ are independent,
$$\P_k(Z_n^{(1)}>0, \ Z_n^{(2)}>0)= \E( p(\f_n)^2).$$
Thus, for every $\epsilon>0$, by Markov inequality, 
$$\P_k(Z_n^{(1)}>0, \ Z_n^{(2)}>0 \ \vert \ Z_n>0) \geq \epsilon^2
 \P_k(p(\f_n) \geq \epsilon \ \vert \ Z_n>0).$$
By Proposition \ref{deux}, we get
$$\P_k(p(\f_n) \geq \epsilon \ \vert \ Z_n>0)
\stackrel{n\rightarrow \infty}{\longrightarrow }0.$$
$\newline$

In the (WS) case, by  (\ref{Psurk}),  for every $\epsilon \in (0,1]$ :
\Bea
\P_k(Z_n>0)
&\geq & \int_0^{\epsilon} \P( p(\f_n)\in \d x) (1-(1-x)^k).
\Eea
Moreover
\Bea
&&\big\vert\int_0^{\epsilon} \P( p(\f_n)\in \d x) (1-(1-x)^k)-\int_0^{\epsilon}
\P( p(\f_n)\in \d x)kx \big \vert \\
& & \qquad \leq  k \sup_{x\in[0,\epsilon [}\bigg\{\big\vert \frac{1-(1-x)^k}{kx}-1 \big\vert \bigg\}
\int_0^{\epsilon }\P( p(\f_n)\in \d x)x \\
& & \qquad \leq k \sup_{x\in[0,\epsilon [}\bigg\{\big\vert
 \frac{1-(1-x)^k}{kx}-1 \big\vert \bigg\} \P_1(Z_n>0).
\Eea
Putting these two inequalities together yields
$$\P_k(Z_n>0)\geq k\int_0^{\epsilon }
\P( p(\f_n)\in \d x)x -
k \sup_{x\in[0,\epsilon [}\bigg\{\big\vert \frac{1-(1-x)^k}{kx}-1 \big\vert \bigg\} \P_1(Z_n>0).$$
Then
\Bea
 \P_1(p(\f_n)\in [0,\epsilon )), \  Z_n>0)&=&\int_0^{\epsilon }
\P( p(\f_n)\in \d x)x \\
&\leq & \P_k(Z_n>0)/k +   \sup_{x\in[0,\epsilon [}
\bigg\{\big\vert \frac{1-(1-x)^k}{kx}-1 \big\vert \bigg\} \P_1(Z_n>0).
\Eea
Dividing by $\P_1(Z_n>0)$ and letting $n\rightarrow \infty$ ensure that
\Bea
 \limsup_{n\rightarrow\infty} \P_1(p(\f_n)\in [0,\epsilon ) \ \vert \ Z_n>0)
&\leq&
\limsup_{n\rightarrow\infty} \frac{\P_k(Z_n>0)}{k\P_1(Z_n>0)}+
\sup_{x\in[0,\epsilon [}\bigg\{\big\vert \frac{1-(1-x)^k}{kx}-1 \big\vert \bigg\} \\
&\leq&
\frac{\alpha_k}{k}+
\sup_{x\in[0,\epsilon)}\bigg\{\big\vert \frac{1-(1-x)^k}{kx}-1 \big\vert \bigg\}. \\
\Eea
Finally recall Theorem \ref{asympta} and use
$$\alpha_k/k \stackrel{k\rightarrow\infty}{\longrightarrow}0, \qquad \forall k \in \N^*, \ \
\sup_{x\in[0,\epsilon [}\bigg\{\big\vert \frac{1-(1-x)^k}{kx}-1 \big\vert \bigg\}
\stackrel{\epsilon \rightarrow 0}{\longrightarrow}0,$$
to get $\lim_{\epsilon \rightarrow  0+}\limsup_{n\rightarrow \infty}
\P_k(p(\f_n)\leq \epsilon \ \vert \ Z_n>0) =0.$
\end{proof}
$\newline$
\begin{proof}[Proof of Proposition \ref{densi}]
Recall that for every $\u_n\in\F^n$, $\P_k(Z_{\u_n}>0)=1-(1-p(\u_n))^k$. Thus,
\Bea
\P_k(p(\f_n) \in \d x \ \vert \ Z_n>0)&=&
\frac{\P(p(\f_n)\in \d x) (1-(1-x)^k)}{\P_k(Z_n>0)} \\
&=&
\P_1(p(\f_n) \in \d x \ \vert \ Z_n>0)
\frac{\P_1(Z_n>0)}{\P_k(Z_n>0)}\frac{(1-(1-x)^k)}{x}.
\Eea
Then, for every $\epsilon>0$,
\Bea
\limsup_{n\rightarrow\infty}
\P_k(p(\f_n) \geq \epsilon \ \vert \ Z_n>0)&=& \frac{1}{\alpha_k} \limsup_{n\rightarrow \infty}
 \int_{\epsilon}^1
\P_1(p(\f_n) \in \d x \ \vert \ Z_n>0)\frac{(1-(1-x)^k)}{x} \\
&\leq & \frac{1}{\epsilon \alpha_k},
\Eea
and the left hand part tends to zero as $k$ tends to
infinity by Theorem \ref{asympta}. This ends up the proof.
\end{proof}

\subsection{Proofs of section 3.4}
We know from Preliminaries that the BPRE  $(Z_n)_{n\geq 0}$ starting from
$k$ particles and conditioned to be positive converges in distribution to $\YY_k$, and we call $G_k$ its p.g.f :
$$G_k(s)=\lim_{n\rightarrow\infty} \E_k(s^{Z_n} \ \vert \ Z_n>0) \qquad (0\leq s \leq 1).$$
Adding that by \cite{bpree}, $G_1'(1)<\infty$ we can split the proof of Theorem $\ref{quasista}$ into three parts
\begin{itemize}
\item[(i)] For every $k\geq 1$,
$$\E(G_k(f(s)))=\gamma G_k(s) + 1-\gamma \quad (0\leq s\leq 1). $$
\item[(ii)] In the (SS+IS) case, for every $k\geq 1$, 
$\YY_k\stackrel{d}{=}\YY_1.$
\item[(iii)] There is a unique p.g.f $G$ which satisfies  
$$\E(G(f(s)))=\E(f'(1)) G(s) + 1-\E(f'(1)) \ \ (0\leq s\leq 1), \quad G'(1)<\infty  \qquad (\mathcal{E}).$$
\end{itemize}
One can note that (iii) proves (ii) in the (SS) case, adding that $G_k'(1)<\infty$ (whose proof for $k=1$
in \cite{bpree}  can be generalized). \\

\begin{proof}[Proof of (i)] Let $f_0$ be distributed as $f$ and independent of $(Z_n)_{n\in\N}$.
For every $n\in\N$,
\Bea
1-\E_k(s^{Z_{n+1}} \ \vert \ Z_{n+1}>0 ) &=& \frac{\E_k (1-s^{Z_{n+1}})}{\P_k(Z_{n+1}>0)} \\
&=&\frac{1}{\P_k(Z_{n+1}>0)} \sum_{i=1}^{\infty}\P_k(Z_n=i)\E_k(1-s^{Z_{n+1}} \ \vert \ Z_n=i) \\
&=& \frac{\P_k(Z_n>0)}{\P_k(Z_{n+1}>0)} \sum_{i=1}^{\infty} \P_k(Z_n=i \ \vert \ Z_n>0) \E(1-f_0(s)^i) \\
&=&\frac{\P_k(Z_n>0)}{\P_k(Z_{n+1}>0)}  \E_k(1- f_0^{Z_n}(s) \ \vert \ Z_n>0).
\Eea
Then letting $n$ tend to infinity and usingthe  asymptotics given in the Preliminaries section gives
$$1-G_k(s)=\gamma^{-1} \E(1-G_k(f_0(s)),$$
where $\gamma=\E(f'(1))$ in the (SS+IS) case.
\end{proof}
$\newline$
\begin{proof}[Proof of (ii)]
For every $i\geq 1$,
 $$\P_2(Z_n=i)= \P_2(Z_n^{(1)}=i, \ Z_n^{(2)}=0) + \P_2(Z_n^{(1)}=0, \ Z_n^{(2)}=i)+ \P_2(Z_n=i, \ Z_n^{(1)}>0, \ Z_n^{(2)}>0).$$
 Moreover $\vert \P_2(Z_n^{(1)}=i, \ Z_n^{(2)}=0) - \P_2(Z_n^{(1)}=i) \vert \leq \P_2(Z_n^{(1)}>0, \ Z_n^{(2)}>0)$, then
 $$\vert \P_2(Z_n=i)- 2\P_1(Z_n=i) \vert \leq 3\P_2(Z_n^{(1)}>0, \ Z_n^{(2)}>0).$$
 Thus, using Proposition \ref{deux},
$$\lim_{n\rightarrow \infty} \frac{\P_2(Z_n=i)}{\P_2(Z_n>0)} = \lim_{n\rightarrow \infty}
\frac{ 2\P_1(Z_n=i)}{{\P_2(Z_n>0)}}.$$
As $\alpha_2=\lim_{n\rightarrow\infty}
\P_2(Z_n>0)/\P_1(Z_n>0)=2$, we have
\Bea
\P(\YY_2=i)&=&\lim_{n\rightarrow \infty} \P_2(Z_n=i \ \vert \ Z_n>0) \\
&=&\lim_{n\rightarrow \infty} \frac{2\P_1(Z_n=i \ \vert \ Z_n>0)\P_1(Z_n>0)}{\P_2(Z_n>0)}  \\
&=& \P(\YY_1=i).
\Eea
Then $\YY_1\stackrel{d}{=}\YY_2$ and the same argument ensures
that for every $k\geq 1$, $\YY_k=\YY_1$.
\end{proof}
$\newline$

The proof of (iii) requires the following lemma 
\begin{Lem}
\label{lemfonct} If $H:[0,1]\rightarrow \RRR$ is a power series  continuous on $[0,1]$,  $H(1)=0$
and \be \label{equatder}
 H(s)=\frac{\E (H(f(s))f'(s))}{\E(f'(1))}, \quad (0\leq s\leq 1),
\ee then  $H\equiv 0$.
\end{Lem}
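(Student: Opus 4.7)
The plan is to iterate the functional equation $n$ times and then apply a change of measure that makes the tilted environment still subcritical in the (SS) regime. I would first prove by induction on $n$ that
$$\gamma^n H(s)=\E\bigl[H(\w{F}_n(s))\,\w{F}_n'(s)\bigr],$$
where $\w{F}_n$ is the iid iteration defined by $\w{F}_0(s)=s$ and $\w{F}_{n+1}(s)=f_n(\w{F}_n(s))$, i.e.\ $\w{F}_n=f_{n-1}\circ f_{n-2}\circ\cdots\circ f_0$ (so $\w{F}_n\stackrel{d}{=}F_n$ by reversal of iid environments). The inductive step applies the hypothesis to the random point $\w{F}_n(s)$, multiplies through by $\w{F}_n'(s)$, uses the chain rule $(f_n\circ\w{F}_n)'(s)=f_n'(\w{F}_n(s))\,\w{F}_n'(s)$, and exploits the independence of $f_n$ from $(f_0,\dots,f_{n-1})$.

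Since each $f_k$ is convex with $f_k(1)=1$, its derivative $f_k'$ is non-decreasing, so $\w{F}_n'(s)\leq \w{F}_n'(1)=\prod_{k=0}^{n-1}f_k'(1)$ for all $s\in[0,1]$. Using continuity of $H$ and $H(1)=0$, for every $\epsilon>0$ I pick $\delta>0$ with $|H|\leq\epsilon$ on $[1-\delta,1]$, and set $M:=\sup_{[0,1]}|H|$. Splitting the expectation according to $\{\w{F}_n(s)\geq 1-\delta\}$ and its complement and dividing by $\gamma^n$ yields
$$|H(s)|\leq \epsilon+M\,\E\bigl[W_n\,\mathbf{1}_{\w{F}_n(s)<1-\delta}\bigr],\qquad W_n:=\prod_{k=0}^{n-1}f_k'(1)/\gamma,$$
where $W_n$ is the standard non-negative martingale of mean $1$.

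The key step is a change of measure. I define the probability $\w{\P}$ on the projective $\sigma$-algebra so that $d\w{\P}/d\P=W_n$ on $\sigma(f_0,\dots,f_{n-1})$; these measures are consistent and, by the multiplicative structure of $W_n$, the $(f_k)$ are iid under $\w{\P}$ with the size-biased law $\w{\P}(f\in dg)=g'(1)\,\P(f\in dg)/\gamma$. In particular $\E[W_n\mathbf{1}_A]=\w{\P}(A)$ for any $A\in\sigma(f_0,\dots,f_{n-1})$. In the (SS) regime,
$$\w{\E}[\log f'(1)]=\E[f'(1)\log f'(1)]/\gamma<0,$$
so the BPRE driven by the tilted environment is again subcritical and hence extinguishes $\w{\P}$-a.s.; in particular $F_n(s)\to 1$ $\w{\P}$-a.s., and since $\w{F}_n(s)\stackrel{d}{=}F_n(s)$ under $\w{\P}$ we get $\w{\P}(\w{F}_n(s)<1-\delta)\to 0$. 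Letting $n\to\infty$ in the previous display gives $|H(s)|\leq \epsilon$, and since $\epsilon>0$ and $s\in[0,1]$ are arbitrary, $H\equiv 0$.

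The main obstacle is the change-of-measure step together with the $\w{\P}$-a.s.\ extinction of the tilted BPRE. The identity $\w{\E}[\log f'(1)]=\E[f'(1)\log f'(1)]/\gamma$ is exactly where the (SS) assumption enters: in (IS) the tilted walk is critical and one can still invoke a.s.\ extinction of critical BPRE, whereas in (WS) the tilted drift is strictly positive, the tilted BPRE is supercritical and $\w{F}_n(s)\not\to 1$ $\w{\P}$-a.s., so the argument breaks down—consistent with the fact that uniqueness of the Yaglom p.g.f.\ is claimed only in the (SS) case.
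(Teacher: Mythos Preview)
Your proof is correct in the (SS) regime and takes a genuinely different route from the paper. The paper argues by a maximum principle: in the generic case where $\E(f'(s))<\E(f'(1))$ for all $s<1$, if $\beta_n\in[0,\alpha_n]$ attains $\sup_{[0,\alpha_n]}|H|$ with $\alpha_n\uparrow 1$, the functional equation gives
\[
|H(\beta_n)|\le\frac{\E(f'(\beta_n))}{\E(f'(1))}\,\sup_{[0,1]}|H|<\sup_{[0,1]}|H|,
\]
so the supremum of $|H|$ over $[0,1]$ can only be approached near $1$ and hence vanishes by continuity and $H(1)=0$; the degenerate case ($f$ a.s.\ linear) is handled separately via $F_n(\alpha)\to 1$ in probability. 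This argument uses only the standing subcriticality $\E[\log f'(1)]<0$ and therefore proves the lemma, as stated, in all three regimes (SS, IS, WS).

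Your iterated change-of-measure approach is more probabilistic and ties the lemma neatly to a.s.\ extinction of the size-biased BPRE, but it delivers the conclusion only when $\E[f'(1)\log f'(1)]\le 0$, i.e.\ in (SS) and (IS). As you correctly observe, in (WS) the tilted drift is positive and your argument breaks---yet the paper's maximum-principle proof still goes through there. For the application to uniqueness of the Yaglom p.g.f.\ in Theorem~\ref{quasista} only the (SS) case is invoked, so your version suffices for that purpose; but the lemma itself, as stated and proved in the paper, is strictly more general than what you establish.
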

\begin{proof}
FIRST CASE : There exists $s_0 \in [0,1)$ such that $\E(f'(s_0))=\E(f'(1))$. \\

The monotonicity of $f'$ implies
$$f'(s_0)=f'(1) \quad a.s.,$$
and $f'$ is $a.s.$ constant on $[s_0,1]$. As it is a power series, $f'$ is $a.s.$ constant. \\
Thus
$$f(s)=f'(1)s+ (1-f'(1))\quad (0\leq s\leq 1), \qquad f'(1)\leq 1  \quad a.s.$$

Moreover, let  $\vert H(\alpha)\vert=\sup\{\vert H(s) \vert, \  s\in[0,1]\}$ with $\alpha \in [0,1)$, and note that
$$\E\big(f'(1)( H(\alpha)- H(f(\alpha)))\big)=0.$$
Thus $H(f(\alpha))=H(\alpha)$ $a.s.$ and by induction, recalling that $F_n=f_0\circ f_1\dots\circ f_{n-1}$, we have
$$H(F_n(\alpha))=H(\alpha) \quad \rm{a.s}.$$
As $Z_n$ is subcritical,  then $\E(F_n(\alpha))=\E(\alpha^{Z_n})\rightarrow 1$ as $n\rightarrow \infty$. So  $F_n(\alpha)\rightarrow 1$ in probability as $n\rightarrow \infty$. Adding that  $F_n(\alpha)<1 \ a.s.$ for every $n\in\N$
and that $H$ is a power series, then
$H$ is constant and equals  zero since $H(1)=0$. \\

SECOND CASE : For every $s_0 \in [0,1[, \ \E(f'(s_0))<\E(f'(1))$. \\
If $H\ne 0$ then  there exists $\alpha\in [0,1[$
such that
$$\t{sup}\{\mid H(s))\mid : s \in [0, \alpha]\}>0$$
Let $\alpha_n \in [\alpha,1[$ such that $\alpha_n\stackrel{n\rightarrow \infty }{\longrightarrow }1$.
Then,  for every $n\in\N$,
there exists $\beta_n \in [0,\alpha_n]$ such that :
\bea
\t{sup}\{\mid H(s)\mid : s \in [0, \alpha_n]\}&=& \mid H (\beta_n) \mid \nonumber \\
&\leq& \frac{\E(f'(\beta_n))}{\E(f'(1))} \sup\{\vert H(s) \vert, \ 0\leq s\leq 1\} \nonumber \\
%&\leq & \t{sup}\{\mid H(s)\mid : s \in [0, \t{max}(f_0(\alpha_n),f_1(\alpha_n))]\} \frac{f_0'(\beta_n)+f_1'(\beta_n)}{2m} \nonumber \\
&<& \t{sup}\{\mid H(s)\mid,  \ 0\leq s\leq 1\},
\nonumber \eea
since
$\t{sup}\{\mid H(s)\mid,  \ 0\leq s\leq 1\}>0$ and $\E(f'(\beta_n))<\E(f'(1))$.  As $I\cap
J=\varnothing$,  $\sup I <\sup (I\cup J) \Rightarrow \sup I<\sup
J$, we get
$$\t{sup}\{\mid H(s)\mid : s \in [0, \alpha_n]\}< \t{sup}\{\mid H(s)\mid : s \in ]\alpha_n, 1]\}.$$
And  $H(s) \stackrel{s\rightarrow 1}{\longrightarrow }0$  leads to a contradiction letting $n\rightarrow \infty$. So $H=0$.
\end{proof}

\begin{proof}[proof of (iii)]
Assume that $G_1$ and $G_2$ are two probability generating functions which verify $(\mathcal{E})$. By differentiation, $G_1'$ and $G_2'$ satisfy
$$\E( G'(f(s))f'(s))=\E(f'(1))G'(s).$$
Then $H:= G_2'(1)G_1' -G_1'(1)G_2'$ verifies the conditions of Lemma \ref{lemfonct}. As a consequence,
$$G_2'(1)G_1' =G_1'(1)G_2'.$$
And $G_1(0)=G_2(0)=0$, $G_2(1)=G_1(1)=1$ ensure that $G_1=G_2$, which gives the uniqueness for $(\mathcal{E})$.
\end{proof}

\subsection{Proof of Section 3.5}
\begin{proof}[Proof of Proposition \ref{Qprocess}] First, we have
\Bea
\P_k(Z_1=l_1,...,Z_n=l_n \vert Z_{n+p}>0 )&=&
\P_k(Z_1=l_1,...,Z_n=l_n) \frac{\P_{l_n} (Z_p>0)}{\P_k (Z_{n+p}>0)} .
\Eea
Then, using (\ref{equivun}), (\ref{equivdeux}), (\ref{equiv3}), we get
$$\lim_{p\rightarrow \infty}\P_k(Z_1=l_1,...,Z_n=l_n \vert Z_{n+p}>0 )=
\gamma^{-n}\frac{\alpha _{l_n}}{\alpha_k}\P_k(Z_1=l_1,...,Z_n=l_n).$$
and recall $\alpha_l=l$ in the (SS+IS) case to get the distribution of $(Y_n)_{n\in\N}$. \\

To get the limit distribution of $(Y_n)_{n\in\N}$, note that,  for every $l \in\N^*$,
$$\P_k(Y_n=l)=
\gamma^{-n}\frac{\alpha _{l}}{\alpha_k}\P_k(Z_n=l)
=\gamma^{-n} \P_k(Z_n>0) \frac{\alpha _{l}}{\alpha_k}\P_k(Z_n=l\ \vert  \ Z_n>0).$$
Use respectively
  (\ref{equivun}) and  (\ref{equivdeux}) to get the limit in distribution in the (SS) case and the (IS). \\

Finally, in the (WS) case, by (\ref{equiv3}),
there exists $C>0$ such that

$$
\P_k(Y_n\leq l) \leq  Cn^{-3/2}\frac{\alpha_l}{\alpha_k} \P_k(Z_n\leq l \ \vert \ Z_n>0) 
\leq C n^{-3/2}\frac{\alpha_l}{\alpha_k}.
$$
Then Borel-Cantelli Lemma ensures that $Y_n$ tends $a.s.$ to infinity as $n\rightarrow \infty$.
\end{proof}
$\newline$
\begin{proof}[Proof of (\ref{mesenv})]
To prove the convergence and the equality, note that
\Bea
\P_k \big( \f _p \in  \d \i _p \vert Z_{n+p}>0 \big)
&=& \frac{ \P\big(\f _p \in  \d \i_p\big) \E_k (\P_{Z_{\i _p}} (Z_{n}>0)) }{\P_k(Z_{n+p}>0)} \\
&=& \frac{\P_1(Z_n>0)}{\P_k(Z_{n+p}>0)} \sum_{l=1}^{\infty} \P_k(Z_{\i _p}=l) \frac{\P_l(Z_n>0)}{\P_1(Z_n>0)}.
\Eea
The asymptotics given in the Preliminaries section  ensure that
$$ \frac{\P_1(Z_n>0)}{\P_k(Z_{n+p}>0)} \stackrel{n\rightarrow \infty}{\longrightarrow} \frac{1}{\gamma^p \alpha_k},$$
and using the bounded convergence Theorem with
$$ \frac{\P_l(Z_n>0)}{\P_1(Z_n>0)} \stackrel{n\rightarrow \infty}{\longrightarrow} \alpha_l,
\quad  \frac{\P_l(Z_n>0)}{\P_1(Z_n>0)}\leq l, \quad \E(Z_{\u_p})<\infty.$$
ensures that
$$\lim_{n\rightarrow\infty} \P_k\big( \f _p
 \in  \d \i_p \vert Z_{n+p}>0 \big) \\
= \gamma^{-p}\P\big(\f _p \in  \d
\i_p\big)\sum_{l=1}^{\infty} \P_k(Z_{\i _p}=l) \frac{\alpha_l}{\alpha _k}.$$
This completes the proof.
\end{proof}
%\begin{proof}[Sketch of proof of Proposition \ref{limicond}]
%On définit une mesure d'environement arrière $\mu$. Notons $G_n(\i,s)$ la fonction génératrice
%le long de $\i$ cond à survivre  qui décroit quand $n$ tend vers l'infini.
%$$\E_k(s^{Z_n}\vert Z_n^{(1)) >0)=\int \mu (d\i) G(\i,s)$$
%\end{proof}
\section{Appendix : Random walk with negative drift}
\label{sectrw}
We study here the random walk $(S_n)_{n\in\N}$ with negative drift.
Indeed, in the linear fractional case, the survival probability 
is a functional of the random walk obtained by summing the successive means of environments (see (\ref{explfc})). In the general case, the random walk  appears in the lower bound of the survival probability (see (\ref{lienrw})). More precisely,  we need
to control the successive values of the random walk with negative drift conditioned to stay above $-x<0$.

 More specifically,
let $(X_i)_{i \in \N}$ be iid random variables distributed as $X$ with
$$ \E(X)<0.$$
We assume that for every $z\in [0,1]$, $\E(\exp(zX))<\infty$ and
$\E(X\exp(\alpha X))=0$ for some $0<\alpha<1$.
Set $\gamma:=\E(\exp(\alpha X))$,
$$S_n:=\sum_{i=0}^{n-1} X_i, \qquad (S_0=0),$$
and for all $n\in \N, \ k \in \N$,
$$L_n=\min\{S_i, \ 0\leq i\leq n\}.$$
Its asymptotic is given in Lemma 4.1 in $\cite{bpree}$ or Lemma  7 in
\cite{Hirano}. There exists a
linearly increasing  positive function $u$  such that, as $n\rightarrow \infty$
\be
\label{min}
\P(L_n\geq -x)\sim  e^{\alpha x}u(x)n^{-3/2}\gamma^n,
\ee
for $x\geq 0$ if the distribution $X$ is non-lattice, and for $x\in \lambda \Z$ if the distribution of
$X$ is supported by a centered lattice $\lambda \Z$. \\
Moreover for each $\theta >\alpha$, there exists $c_{\theta}>0$ such that
\be
\label{minmaj}
\P(L_n\geq -x)\leq c_{\theta} e^{\theta x}n^{-3/2}\gamma^n, \qquad (x\geq 0, \  n\in \N).
\ee
Finally, using (\ref{min}) and the fact that $u$ grows linearly, there exist $c_{-},c_+>0$ such that
the two following positive measures on $[0,1]$,
$$\nu_-(\d x)=c_{\_}\log(1/x)x^{-\alpha-1}\d x, \qquad
\nu_+(\d x)=c_+(\delta_1(\d x)+\log(1/x)x^{-\alpha-1}\d x ),$$
verify  for every $x \in ]0,1]$
\be
\label{bornes}
\nu _{-}([x,1])\leq
\lim_{n\rightarrow \infty} \frac{\P(e^{L_n}\geq x)}{n^{-3/2}\gamma^n}\leq \nu_{+}([x,1]).
\ee
$\newline$ 

We need to control the successive values of the random walk conditioned to stay above   $-x$
($x\geq 0$).
Under integrability conditions, it  is known that the process
$(S_{[nt]}/ n^{1/2}| L_n \geq 0)$ converges weakly to the  Brownian meander as $n\rightarrow\infty$ (see \cite{Iglehart}). Moreover Durrett \cite{Durrett}
has proved that if there exists $q>2$ such that $P\,\{X_1>x\}\sim x^{-q}L(x)$ as $x\rightarrow\infty$, where $L$ is slowly varying,
then  $(S_{[nt]}/ n| L_n \geq 0)$ converges   weakly to a non degenerate limit which has  a single jump. \\
We prove here that the random walk conditioned to stay above  $-x$
($x\geq 0$) spends very short time close to its minimum, by giving an upper bound of the number of visits to a level of the random walk reflected at its minimum. To be more specific, define
$$N_n(k)=\t{card}\{ i \in \N, \ i \leq n, \ k\leq S_i-L_n<k+1\}.$$

 \begin{Lem} \label{rw}
For every $\theta>\alpha$, there exists $d>0$ such that
$$\limsup_{n\rightarrow \infty} \P(N_n(k)\geq l \mid L_n\geq -x )
\leq d e^{\theta k}/ \sqrt{l}, \qquad (k,l \in \N, \  x\geq 0).$$
Moreover for all $\theta>\alpha$ and  $x\geq 0$, there exists $C>0$ such that
\be
\label{contr}
\P(N_n(k)\geq l \ \vert \ L_n\geq -x)\leq   C e^{\theta k}/\sqrt{l}, \quad (k,n, l \in \N).
\ee
\end{Lem}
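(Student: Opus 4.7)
The plan is to apply the strong Markov property at successive visits of the walk to the strip $[k,k+1)$, use the Feller-type tail (\ref{minmaj}) to extract the factor $e^{\theta k}$, and harvest the factor $l^{-1/2}$ from a convolutional/local-limit argument in the mean-zero tilted walk. First I would partition on the lattice value $m$ of $L_n$ and shift by $-m$, turning $(S_i)_{i\le n}$ into a walk starting from $-m\in[0,x]$ that stays $\ge 0$, with $N_n(k)$ equal to the count $V_n(k):=\#\{i\le n:S_i-m\in[k,k+1)\}$ of visits to level $[k,k+1)$. Writing $\P^y$ for the law started from $y$, it then suffices to show the uniform bound
$$\P^y\bigl(V_n(k)\ge l,\,L_n\ge 0\bigr)\le C\,e^{\theta k}\,n^{-3/2}\gamma^n/\sqrt{l},\qquad y\in [0,x],$$
and divide by the asymptotic $\P(L_n\ge -x)\sim u(x)e^{\alpha x}n^{-3/2}\gamma^n$ from (\ref{min}).

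Next, let $T_l$ be the $l$-th hitting time of $[k,k+1)$, so that $\{V_n(k)\ge l\}=\{T_l\le n\}$. Applying the strong Markov property at $T_l$, together with (\ref{minmaj}) and the fact that $S_{T_l}\in[k,k+1)$ forces $\P^{S_{T_l}}(L_{n-T_l}\ge 0)\le c_\theta e^{\theta(k+1)}(n-T_l)^{-3/2}\gamma^{n-T_l}$, one obtains
$$\P^y(T_l\le n,\,L_n\ge 0)\le c_\theta e^{\theta(k+1)}\sum_{t=0}^n (n-t)^{-3/2}\gamma^{n-t}\,\P^y(T_l=t,\,L_t\ge 0),$$
which places the desired $e^{\theta k}$ factor in front. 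Iterating the strong Markov at $T_1<\cdots<T_l$ writes $T_l$ as a sum of $l$ hitting/return times to the strip $[k,k+1)$. After tilting by $e^{\alpha X}/\gamma$ the walk becomes mean-zero, and the weighted sum above converts to a convolutional expression involving the $l$-fold convolution of the first-return time distribution; first-return densities of a mean-zero one-dimensional walk are of order $t^{-3/2}$, and a Stone-type local-limit estimate on their $l$-fold convolution, integrated against $(n-t)^{-3/2}\gamma^{n-t}$, would produce the desired $l^{-1/2}$ on top of $n^{-3/2}\gamma^n$.

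The main obstacle I expect is making this last step rigorous with the required uniformity: one must control the density of the $l$-fold convolution of the first-return time to $[k,k+1)$ under the tilted mean-zero walk, uniformly in the starting point $z\in[k,k+1)$, in $k$ and in $n$. This should follow from a local limit theorem for the tilted walk together with the Wiener--Hopf factorization, but the uniformity in $k$ and in $z$ is the delicate point. Finally, the passage from the $\limsup$-as-$n\to\infty$ bound to the uniform-in-$n$ statement (\ref{contr}) should follow from a compactness/tightness argument, using (\ref{minmaj}) to control the finite-$n$ tails directly.
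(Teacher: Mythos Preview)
Your decomposition cuts the path only once, at $T_l$, and this is where the argument breaks down. After your strong Markov step you are left with
\[
\sum_{t=0}^{n}(n-t)^{-3/2}\gamma^{n-t}\,\P^y(T_l=t,\,L_t\ge 0),
\]
and you need this to be of order $n^{-3/2}\gamma^{n}/\sqrt{l}$. For $t\le n/2$ the factor $(n-t)^{-3/2}$ already yields $n^{-3/2}$, and the remaining sum $\sum_t\gamma^{-t}\P^y(T_l=t,L_t\ge0)$ can indeed be controlled after tilting. The trouble is the range $t>n/2$: there $(n-t)^{-3/2}$ carries no smallness in $n$, and the only bound you have for the other factor that does not drop the $T_l$--information is $\P^y(L_t\ge0)\le c\,t^{-3/2}\gamma^{t}$, which gives $n^{-3/2}\gamma^{n}$ but \emph{no} $l^{-1/2}$. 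Your proposal to recover $l^{-1/2}$ from an $l$-fold convolution of first-return times does not go through cleanly: once you iterate the strong Markov property at $T_1<\dots<T_l$, the intermediate pieces must each satisfy the constraint ``stay $\ge 0$'', so the return times are neither i.i.d.\ nor the unconstrained first-return times whose $t^{-3/2}$ tail you invoke. Under the tilted mean-zero law these constrained returns depend on $k$ (the barrier at $0$ is at distance $k$), and the stable-$1/2$ heuristic for their sum does not produce $l^{-1/2}$ when integrated against $(n-t)^{-3/2}$; the dominant contribution in fact comes from $t$ near $n$, exactly where your estimate is weakest.

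The paper avoids this difficulty by cutting the path at \emph{two} visit times rather than one, and then \emph{concatenating} the two outer pieces. Concretely, on $\{N_n(k)\ge 2l\}$ one excises either the block $[T_l,T_{N_n(k)}]$ or the block $[T_1,T_l]$; in both cases the excised block has length at least $l$, and the two remaining pieces, glued together, form a single path of length $h$ that must stay above $-x$. A path-surgery lemma (stated separately in the paper) shows that this yields
\[
\P\bigl(L_n\ge -x,\ N_n(k)\ge 2l,\ \text{outer length}=h\bigr)\ \le\ (k+1)\,\P(L_{n-h}\ge -k)\,\P(L_h\ge -x),
\]
where the factor $k+1$ accounts for the position of the global minimum relative to the excised block. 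One of the two choices always gives $h\ge n/2$, so (\ref{min}) furnishes the full $n^{-3/2}\gamma^{h}$ from $\P(L_h\ge -x)$; and since the middle block has length $n-h\ge l$, the bound (\ref{minmaj}) on $\P(L_{n-h}\ge -k)$ contributes $e^{\theta k}(n-h)^{-3/2}\gamma^{n-h}$, whose sum over $n-h\ge l$ is exactly $O(e^{\theta k}/\sqrt{l})$. No tilting, no local limit theorem, no convolution estimate is needed; the $l^{-1/2}$ comes directly from $\sum_{j\ge l}j^{-3/2}$. The passage to the uniform-in-$n$ statement then follows by replacing the asymptotic (\ref{min}) with the uniform bound (\ref{minmaj}) in the estimate for $\P(L_h\ge -x)$, at the cost of letting the constant depend on $x$.
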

$\newline$
%As a consequence, denoting by $(S_n^x)_{n\in\N}$ the randow walk $(S_n)_{n\in\N}$ conditioned
%to be larger than $x$, we have
%\begin{Cor}
%For every $x\leq 0$ and $f : \RRR \rightarrow \RRR^+$ such that
%$\sum_{k\in\Z, \ k\geq x} f(k)<\infty$,
%$$\E(\sum_{k\in\N} f(S_k^x))<infty$$
%\end{Cor}

Moreover, we will use the following consequence of the preceding lemma.
\begin{Cor} \label{beta} If $\alpha<1/2$, there exists $\beta > 0$  such that for
all $x\geq 0$ and $n\in\N$,
$$\P\big(\sum_{i=0}^{n} \exp(L_n-S_i) \leq \beta \  \vert \ L_n\geq -x \big)\geq 1/4.$$
\end{Cor}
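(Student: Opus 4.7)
The plan is to decompose $\sum_{i=0}^{n} e^{L_n - S_i}$ according to the integer part of $S_i - L_n$, so that Lemma \ref{rw} can be applied level by level. Since $S_i \ge L_n$ for every $i\le n$, grouping indices gives
$$
\sum_{i=0}^{n} e^{L_n - S_i} \;=\; \sum_{k=0}^{\infty}\ \sum_{\substack{i\le n\\ k\le S_i - L_n < k+1}} e^{-(S_i - L_n)} \;\le\; \sum_{k=0}^{\infty} N_n(k)\, e^{-k}.
$$
Hence it suffices to find deterministic thresholds $(T_k)_{k\ge 0}$ with $\sum_k T_k e^{-k} \le \beta$ such that, conditionally on $\{L_n \ge -x\}$, the event $\{N_n(k) \le T_k \text{ for all } k\}$ occurs with probability at least $1/4$, uniformly in $n$ and $x$.

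The choice of thresholds is driven by the hypothesis $\alpha<1/2$. First I would fix $\theta\in(\alpha,1/2)$ and then $\epsilon>0$ small enough that $\theta-(1-\epsilon)/2<0$; both choices are possible precisely because $\alpha<1/2$. Set $T_k := A\,e^{(1-\epsilon)k}$ with $A\ge 1$ to be tuned. Applying Lemma \ref{rw} with this $\theta$ and $l=\lceil T_k\rceil$ gives, for a constant $C$ independent of $k$ and $n$,
$$
\P\bigl(N_n(k) > T_k \,\big|\, L_n\ge -x\bigr) \;\le\; \frac{C\,e^{\theta k}}{\sqrt{T_k}} \;=\; \frac{C}{\sqrt{A}}\,e^{(\theta-(1-\epsilon)/2)k}.
$$
A union bound over $k\ge 0$ then sums a convergent geometric series:
$$
\P\bigl(\exists k:\ N_n(k) > T_k \,\big|\, L_n\ge -x\bigr) \;\le\; \frac{C}{\sqrt{A}\,\bigl(1-e^{\theta-(1-\epsilon)/2}\bigr)},
$$
which is at most $3/4$ for $A$ large enough (depending only on $\theta$ and $\epsilon$). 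On the complementary event,
$$
\sum_{i=0}^{n} e^{L_n-S_i} \;\le\; \sum_{k\ge 0} A e^{(1-\epsilon)k}e^{-k} \;=\; \frac{A}{1-e^{-\epsilon}} \;=:\; \beta,
$$
yielding the claimed $\beta$.

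The main (and essentially only) obstacle is to make sure the constant $C$ coming out of Lemma \ref{rw} is uniform in both $x\ge 0$ and $n\in\N$. The $\limsup$ statement of that lemma is already uniform in $x$, so it handles all sufficiently large $n$; the remaining finite range of small $n$ can be absorbed using the trivial bound $N_n(k)\le n+1$ together with the fact that $\sum_i e^{L_n - S_i}\le n+1$ in that range, possibly at the price of enlarging $\beta$ once. Once uniformity is secured, the tuning parameters $\theta,\epsilon,A$ are chosen once and for all, producing a single $\beta$ valid for every $x\ge 0$ and every $n\in\N$, as required.
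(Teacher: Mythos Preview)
Your approach is the same as the paper's: the same level decomposition $\sum_{i\le n} e^{L_n-S_i}\le \sum_{k\ge 0} N_n(k)e^{-k}$ and the same appeal to Lemma~\ref{rw}. Your single exponential threshold $T_k=A\,e^{(1-\epsilon)k}$ is a mild simplification of the paper's two-range scheme (a fixed threshold $N$ for $k<k_0$ and $e^{\mu k}$ for $k\ge k_0$).

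The gap is in your resolution of the uniformity obstacle. The constant in the second part of Lemma~\ref{rw}, display~(\ref{contr}), explicitly depends on $x$ (``for all $\theta>\alpha$ and $x\ge 0$, there exists $C>0$''), so using it directly forces $A$, and hence $\beta$, to depend on $x$. Switching to the $\limsup$ part gives a constant $d$ that is uniform in $x$, but the bound then only holds for $n\ge n_0(k,l,x)$: in the proof of Lemma~\ref{rw} this threshold comes from the \emph{pointwise} asymptotic~(\ref{min}) and genuinely depends on $x$ (and on $k$, $l$). Consequently your proposed patch --- ``$\limsup$ handles all sufficiently large $n$, and for the remaining finite range use $\sum_i e^{L_n-S_i}\le n+1$, enlarging $\beta$ once'' --- fails on two counts: the cut-off $n_0(x)$ is not bounded in $x$, so no single enlargement of $\beta$ covers all small $n$ uniformly; and even for the $\liminf$ version your plain union bound is not justified, because you would need the limsup estimate to hold for \emph{all} $k$ at one common $n$, whereas $n_0$ also depends on $k$.

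The paper handles this second point by interchanging $\limsup_n$ and $\sum_k$ via a reverse Fatou / dominated convergence argument, the domination being supplied (for each fixed $x$) by~(\ref{contr}). This yields $\liminf_n \P(\sum_i e^{L_n-S_i}\le \beta\mid L_n\ge -x)>1/4$ with $\beta$ independent of~$x$; note that the paper's proof in fact stops at this $\liminf$ statement, which is all that is used downstream (Lemma~\ref{majpsur} is only invoked through $\liminf_n$). If you insert the same dominated-convergence step, your argument goes through to the same conclusion.
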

$\newline$

For the sake of simplicity, we assume  that $X\in\Z$ $a.s.$ for the proof of Lemma \ref{rw}.
Thus
$$\forall k,n \in \N^2, \  \ N_n(k)=\t{card}\{ i \in \N, \ i \leq n, \ S_i-L_n=k\},$$
and we denote by $(T_j \ : \ 1\leq j\leq N_n(k))$ the successive times before
$n$ when $(S_i-L_n)_{i\in\N}$ visits $k$.  That is
$$T_1=\inf\{0\leq i\leq n \ : \  S_{i}-L_{n}=k\}, \quad
T_{j+1}=\inf\{T_j< i\leq n \ : \  S_{i}-L_{n}=k\}.$$
First, cutting the path of the random walk between two of these passage times
enables us to prove  the following result.
\begin{Lem}
\label{lemmtoutpourri}
If $X\in\Z$ $a.s.$, then for all $n,k,l,i$ and  $0\leq h\leq n$, we have $$ \P(L_n \geq  -i,\
N_n(k)\geq 2l, \
 T_{l}+n-T_{N_n(k)}=h)\leq (k+1)\P(L_{n-h}\geq -k)  \P(L_{h}\geq
 -i),$$
 and
 $$ \P(L_n \geq  -i,\  N_n(k)\geq 2l,
\
 T_{1}+n-T_{l}=h)\leq (k+1)\P(L_{n-h}\geq -k)  \P(L_{h}\geq
 -i).$$
\end{Lem}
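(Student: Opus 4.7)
The plan is to cut the path of the walk at the passage times $T_l$ and $T_{N_n(k)}$ and exploit independence of the increments across the disjoint time intervals this creates. For the first inequality, I would enumerate possible values $s\in\{0,\dots,h\}$ of $T_l$ and $m\in\{-i,\dots,0\}$ of $L_n$; on the corresponding event the constraint $T_l+n-T_{N_n(k)}=h$ forces $T_{N_n(k)}=s+n-h$, together with $S_s=S_{s+n-h}=m+k$. This splits the path into three natural pieces of lengths $s$, $n-h$ and $h-s$.

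From these pieces I would build two independent auxiliary random walks recycling disjoint subsets of the iid increments $X_0,\dots,X_{n-1}$. The first is the shifted middle piece $\tilde S_j:=S_{s+j}-(m+k)$ for $0\leq j\leq n-h$, using $X_s,\dots,X_{s+n-h-1}$. The second is the glued outer walk $\bar S_j:=S_j$ for $j\leq s$ and $\bar S_j:=S_{j+n-h}$ for $s<j\leq h$, built from the remaining increments; the identity $S_s=S_{s+n-h}$ makes the gluing consistent, so $\bar S$ is a genuine random walk of length $h$ independent of $\tilde S$. On the event, $\tilde S_{n-h}=0$ and $\min\tilde S\geq -k$, while $\bar S_s=m+k$ and $\min\bar S\geq m$. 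Independence yields
$$\P(\text{event},\,T_l=s,\,L_n=m)\leq \P(\bar S_s=m+k,\,\min\bar S\geq m)\cdot \P(L_{n-h}\geq -k).$$

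Summing over $s$ and $m$, each admissible pair corresponds bijectively to a time $s$ at which $\bar S_s-\min\bar S\in\{0,1,\dots,k\}$, on the event $\min\bar S\geq -i$:
$$\sum_{s,m}\P(\bar S_s=m+k,\,\min\bar S\geq m)=\E\left[\ind\{\min\bar S\geq -i\}\,\#\{s\leq h:0\leq \bar S_s-\min\bar S\leq k\}\right].$$
The main obstacle is to bound this expectation by $(k+1)\,\P(L_h\geq -i)$. I would decompose by $j=\bar S_s-\min\bar S\in\{0,\dots,k\}$ and argue, via a last-passage decomposition at the final time $\bar S$ attains its minimum, that on $\{\min\bar S\geq -i\}$ the expected number of visits of $\bar S$ to each level $\min\bar S+j$ is bounded by $\P(L_h\geq -i)$; summing over the $k+1$ levels produces the stated factor.

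The second inequality follows from the symmetric decomposition in which the middle piece is instead $[T_1,T_l]$ of length $n-h$, so that the outer pieces $[0,T_1]$ and $[T_l,n]$ concatenate into an auxiliary walk $\bar S$ of length $h$; the same independence/counting argument then yields an identical bound.
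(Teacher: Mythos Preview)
Your decomposition into a middle piece $[T_l,T_{N_n(k)}]$ and two outer pieces, the concatenation of the latter into a walk $\bar S$ of length $h$, and the use of independence are all correct and are exactly the paper's strategy. The gap is in the counting step. When you relax $\{T_l=s,\ T_{N_n(k)}=s+n-h,\ L_n=m,\ldots\}$ to $\{\bar S_s=m+k,\ \min\bar S\ge m\}$, you discard the information that $T_{N_n(k)}$ is the \emph{last} visit of the full walk to level $L_n+k$; after concatenation this says that $\bar S$ does not return to $m+k$ on $(s,h]$, i.e.\ that $s$ is the last hit of $\bar S$ at the value $\bar S_s$. Without this, distinct $s$ may be repeated visits to the same level, and your key claim $\E\big[\ind\{\min\bar S\ge -i\}\,\#\{s:\bar S_s=\min\bar S+j\}\big]\le \P(L_h\ge -i)$ is false: already for a two-step $\pm1$ walk with $j=0$ and $i$ large, the expected number of visits to the running minimum is strictly larger than $1=\P(L_2\ge -i)$. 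A ``last-passage decomposition at the minimum'' does not repair this, since the minimum is not the level whose visits you are counting.

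If you retain the last-visit constraint, your argument in fact goes through without the paper's case analysis. For any realisation of $\bar S$, the admissible $s$ are then last visits to pairwise distinct levels lying in $[\min\bar S,\min\bar S+k]$, so there are at most $k+1$ of them; and the constraints force $\min\bar S\ge m=\bar S_s-k\ge -i$, so the sum over $(s,m)$ is bounded by $(k+1)\,\P(L_h\ge -i)$ as required. The paper instead splits according to whether the global minimum $L_n$ is attained on the outer or on the middle piece, and uses the last-visit structure (denoted there $R_h(j+k)$) separately in each case to obtain the contributions $\P(L_h\ge -i)$ and $k\,\P(L_h\ge -i)$.
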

\begin{proof}
We introduce  the first hiiting  time $M_n$ of the minimum $L_n$ before
time $n$ and $R_n(l)$ the last passage  time at $l$ before time $n$
$$M_n=\inf\{ j \in [1,n] : S_j=L_n\}, \qquad R_n(l):=\sup\{j\in [1,n] : S_j=l\}.$$

First, we consider the case where $M_n\in [0,T_l]\cup[T_{N_n(k)},n]$ and
split the path of the random walk between times $T_l$ and $T_{N_n(k)}$. For all $j\leq 0$,
 $k\geq 0$ and $0\leq n_1<n_2\leq n$, introduce then
\Bea
&& A(j,n_1,n_2)=\{L_n=j, \ N_n(k)\geq 2l, \ T_l=n_1, \ T_{N_n(k)}=n_2, \ M_n \in [0,n_1]\cup [n_2,n] \},\\
&& B(j,n_1,n_2)=\{\forall  m \in [1,n_1] : \ S_m\geq j, \ S_{n_1}=S_{n_2}=j+k, \\
&& \qquad \qquad \qquad \qquad  \forall  m\in [n_2+1, n] : S_m\geq j, \ S_m \ne j+k, \exists a \in  [0,n_1]\cup [n_2,n], \ S_a=j\}, \\
&& C(j,n_1,n_2)=\{\forall m \in [n_1,n_2 ] : S_m\geq j, \ S_{n_1}=S_{n_2}=j+k\}.
\Eea
Note that conditionally on $D(n_1,n_2):=\{S_{n_1}=S_{n_2}=j+k\}$, $B(j,n_1,n_2 )$ and $ C(j,n_1,n_2)$ are independent,
%\Bea
%\P(B(j,n_1,n_2)\cap C(j,n_1,n_2))&=& \P(S_{n_1}=S_{n_2}=j+k)\P(B(j,n_1,n_2) \ \vert \ S_{n_1}=S_{n_2}=j+k)\P(C(j,n_1,n_2) \ \vert \ S_{n_1}=S_{n_2}=j+k)
  $$\P( C(j,n_1,n_2) \ \vert \ S_{n_1}=j+k)\leq \P(L_{n_2-n_1}\geq -k),$$ and
$$A(j,n_1,n_2)\subset  B(j,n_1,n_2)\cap C(j,n_1,n_2).$$
Then, noting also that
$$ \P( C(j,n_1,n_2) \ \vert \ D(n_1,n_2))=\P(
C(j,n_1,n_2) \ \vert \ S_{n_1}=j+k)\P(S_{n_1}=j+k)/\P(D(n_1,n_2)),$$
we have
 \bea
\P(A(j,n_1,n_2))&\leq&\P(D(n_1,n_2))\P(B(j,n_1,n_2 ) \ \vert \ D(n_1,n_2))\P( C(j,n_1,n_2) \ \vert \ D(n_1,n_2)) \nonumber \\
%&=& \frac{\P(D(n_1,n_2))}{\P(S_{n_1}=j+k)\P(S_{n_2}=S_{n_1} \ \vert \ S_{n_1}=j+k)} \times \nonumber \\
%& &\P(S_{n_1}=j+k)\P(B(j,n_1,n_2 ) \ \vert \ D(n_1,n_2))\P( C(j,n_1,n_2) \ \vert \ S_{n_1}=j+k) \nonumber \\
&=& \P(S_{n_1}=j+k)\P(B(j,n_1,n_2 ) \ \vert \ D(n_1,n_2))\P( C(j,n_1,n_2) \ \vert \ S_{n_1}=j+k)\nonumber \\
\label{equn} &\leq & \P(L_{n_2-n_1}\geq -k)
\P(S_{n_1}=j+k)\P(B(j,n_1,n_2 ) \ \vert \ D(n_1,n_2)). \eea
Moreover, \Bea
&& \{L_n \geq  -i,\  N_n(k)\geq 2l, \  T_{l}+n-T_{N_n(k)}=h, \ M_n\in[0,T_{l}]\cup[T_{N_k(n)},n]\} \qquad \qquad \qquad \qquad  \qquad \qquad \qquad \qquad \\
&& \qquad \qquad \qquad \qquad \qquad \qquad \qquad \qquad \qquad
\qquad =  \bigcup_{\substack{j\geq -i,  \\  1\leq n_1< n_2\leq n,
\ n_1+n-n_2=h}} A(j,n_1,n_2). \Eea Then, using the last two
relations, \bea
& & \P(L_n \geq  -i,\  N_n(k)\geq 2l, \  T_{l}+n-T_{N_n(k)}=h, \ M_n\in[0,T_{l}]\cup[T_{N_k(n)},n]) \nonumber \\
&\leq  & \sum_{ \substack{j\geq -i,  \\ 1\leq n_1< n_2\leq n, \ n_2-n_1=n-h}} \P(A(j,n_1,n_2))
\nonumber \\
&\leq & \P(L_{n-h}\geq -k) \sum_{ \substack{j\geq -i, \\  1\leq n_1<
n_2\leq n, \ n_1+n-n_2=h}}\P(S_{n_1}=j+k)\P(B(j,n_1,n_2 ) \ \vert \
D(n_1,n_2))\nonumber. \eea Concatenating the path of the random walk
before time $n_1$ and after time $n_2$ gives
 \bea
& & \P(L_n \geq  -i,\  N_n(k)\geq 2l, \  T_{l}+n-T_{N_n(k)}=h, \ M_n\in[0,T_{l}]\cup[T_{N_k(n)},n]) \nonumber \\
&\leq & \P(L_{n-h}\geq -k) \sum_{ \substack{j\geq -i, \\  1\leq n_1< n_2\leq n, \ n_1+n-n_2=h}} \P(L_{n_1+n-n_2} =j, \ R_{n_1+n-n_2}(j+k)=n_1) \nonumber  \\
&\leq & \P(L_{n-h}\geq -k) \sum_{ j\geq -i}\P(L_h=j)   \nonumber \\
\label{majun} &=&  \P(L_{n-h}\geq -k)\P(L_h\geq -i). \eea

Second, we consider the case where $M_n\in [T_l,T_{N_n(k)}]$ and split the path of the random walk between times $T_1$ and $T_l$.
For all $j,j'\leq 0$,
 $k\geq 0$ and $0\leq n_1<n_2\leq n$, introduce then
 \Bea
&& A'(j,n_1,n_2)= \{L_n=-j, \ N_n(k)\geq 2l, \ T_l=n_1, \ T_{N_n(k)}=n_2, \ M_n \in [n_1,n_2] \},\\
&& B'(j,j',n_1,n_2) = \{\forall  m \in [1,n_1] : \ S_m\geq j', \ S_{n_1}=S_{n_2}=j+k, \\
&& \qquad \qquad \qquad \qquad  \forall  m\in ]n_2, n] : S_m\geq j', \ S_m \ne j+k, \exists a \in  [0,n_1]\cup [n_2,n] :
 \ S_a=j'\}, \\
&& C'(j,n_1,n_2)=\{\forall m \in [n_1,n_2 ] : S_m\geq j, \ S_{n_1}=S_{n_2}=k+j, \ \exists a \in [n_1,n_2] : S_a=j\}.
\Eea
Note that conditionally on $D(n_1,n_2)=\{S_{n_1}=S_{n_2}=j+k\}$, $B'(j,j',n_1,n_2 )$ and $ C'(j,n_1,n_2)$ are independent,
$$ A'(j,n_1,n_2)\subset \bigcup_{j'=j}^{j+k}  B'(j,j',n_1,n_2) \cap  C'(j,n_1,n_2)$$
and we get the analogue  of  $(\ref{equn})$,
$$\P(A'(j,n_1,n_2))
\leq  \sum_{j'=j}^{j+k}\P(L_{n_2-n_1}\geq -k)
\P(S_{n_1}=j+k)\P(B'(j,j',n_1,n_2 ) \ \vert \ D(n_1,n_2)).$$
Moreover
 \Bea
&&\{L_n \geq  -i,\  N_n(k)\geq 2l, \  T_{l}+n-T_{N_n(k)}=h, \ M\in [T_{l}, T_{N_k(n)}]\} \\
&&= \bigcup_{\substack{j\geq -i,  \\  1\leq n_1< n_2\leq n, \
n_1+n-n_2=h}} A'(j,n_1,n_2). \Eea
Then, following the proof of
(\ref{majun}), we get \bea
&&\P(L_n \geq  -i,\  N_n(k)\geq 2l, \  T_{l}+n-T_{N_n(k)}=h, \ M_n\in [T_{l}, T_{N_k(n)}]) \nonumber  \\
&\leq & \P(L_{n-h}\geq -k)  \sum_{\substack{j'\geq -i, \  j\in [j'-k,j'] }}
\sum_{ \substack{1\leq n_1< n_2\leq n, \\  n_1+n-n_2=h}} \P(S_{n_1}=j+k)\P(B'(j,j',n_1,n_2 ) \ \vert \ D(n_1,n_2)) \nonumber \\
&\leq &  \P(L_{n-h}\geq -k)  \sum_{\substack{j'\geq -i}} k   \max_{j\in [j'-k,j']}
\sum_{ \substack{1\leq n_1< n_2\leq n, \\  n_1+n-n_2=h}} \P(S_{n_1}=j+k)\P(B'(j,j',n_1,n_2 ) \ \vert \ D(n_1,n_2)) \nonumber \\
&\leq & \P(L_{n-h}\geq -k)  \sum_{j'\geq -i} k  \P(L_{h}=j')  \nonumber \\
\label{majdeux}
&\leq & k \P(L_{n-h}\geq -k)  \P(L_{h}\geq -i).
\eea

Combining the  inequalities (\ref{majun}) and (\ref{majdeux}), we  get
$$\P(L_n \geq  -i,\  N_n(k)\geq 2l, \
 T_{l}+n-T_{N_n(k)}=h)\leq (k+1)\P(L_{n-h}\geq -k)  \P(L_{h}\geq -i),$$
which proves the first inequality of the lemma. The second  can
be proved similarly concatenating the random walk between $[0,T_1]$ and
$[T_{N_n(k)},n]$.
\end{proof}
$\newline$
\begin{proof}[Proof of Lemma \ref{rw}]
Let $h\in \N$ such that $h\geq n/2$.  The first inequality of
 Lemma \ref{lemmtoutpourri} below ensures that
$$
\P(L_n \geq  -i,\  N_n(k)\geq 2l, \  T_{l}+n-T_{N_n(k)}=h) \leq  (k+1)\P(L_{h}\geq  -i) \P(L_{n-h}\geq -k).$$
Using (\ref{minmaj}),
$$
\P(L_n \geq  -i,\  N_n(k)\geq 2l, \  T_{l}+n-T_{N_n(k)}=h)  \leq  c_{\theta}(k+1)\P(L_{h}\geq  -i)e^{\theta
k}(n-h)^{-3/2}\gamma^{n-h}.$$
Moreover, using (\ref{min}), for every $i\in\N$,  there
exists $n_0\in\N$ such that for all $n_0/2\leq n/2\leq h$,
\be
\label{prem}
\P(L_{h}\geq  -i) \leq 2e^{i\alpha}u(i) h^{-3/2}\gamma ^{-h}\leq
2.2^{3/2}e^{i\alpha}u(i)n^{-3/2}\gamma ^{h}.
\ee
Then,  writing $c'_{\theta}=2.2^{3/2}.c_{\theta}$,
 \be
\label{premineg}
\P(L_n\geq -i,\  N_n(k)\geq 2l, \  T_{l}+n-T_{N_n(k)}=h)\leq
 c'_{\theta} e^{\alpha i}u(i)(k+1)e^{\theta k}\gamma^n
n^{-3/2}(n-h)^{-3/2}.
\ee

Similarly, for every $h$ such that $n_0/2\leq n/2\leq h$,  the second inequality of
  Lemma \ref{lemmtoutpourri} below ensures that
 \be
 \label{deuxineg}
 \P(L_n\geq -i,\  N_n(k)\geq 2l, \  T_{1}+n-T_{l}=h)
\leq c'_{\theta} e^{\alpha i}u(i)(k+1)e^{\theta k}\gamma^n n^{-3/2} (n-h)^{-3/2}.
\ee

Noting that $a.s.$
$$\{N_n(k)\geq 2l \}=\bigcup_{h=n/2}^{n-l}\{N_n(k)\geq 2l, \ T_{l}+n-T_{N_n(k)}=h\}
\bigcup_{h=n/2}^{n-l}\{N_n(k)\geq 2l, \ T_{1}+n-T_{l}=h\},$$
we can combine
 the  last two inequalities (\ref{premineg}) and (\ref{deuxineg}), which  gives for every $n\geq n_0$,
\Bea
 \P(L_n\geq -i, \  N_n(k)\geq 2l) &\leq & \sum_{n/2\leq h \leq n-l} \P(L_n\geq -i,\  N_n(k)
\geq 2l, \  T_{l}+n-T_{N_n(k)}=h) \\
 &&
+ \sum_{n/2\leq h \leq n-l} \P(L_n\geq -i,\  N_n(k)\geq 2l \  T_{1}+n-T_{l}=h)  \\
&\leq &  2c'_{\theta}e^{\alpha i}u(i)\gamma^n n^{-3/2}(k+1)e^{\theta k}
\sum_{ n/2\leq h \leq n-l } (n-h)^{-3/2} \\
&\leq &2c'_{\theta} e^{\alpha i}u(i)\gamma^n n^{-3/2}(k+1)e^{\theta k}\sum_{ h\geq l } h^{-3/2} \\
&\leq & 2.2c'_{\theta} e^{\alpha i}u(i)\gamma^n n^{-3/2}(k+1)e^{\theta
k}/\sqrt{l}, \quad (n\geq n_0). \Eea  Then,
using again ($\ref{min}$),
$$ \limsup_{n\rightarrow\infty} \P(L_n\geq -i, \  N_n(k)\geq 2l)/\P(L_n\geq -i)
\leq 4c'_{\theta}c_0^{-1} (k+1)e^{\theta k}/\sqrt{l}.$$
Using that $(k+1)e^{\theta k}=o(e^{\theta'k})$ if $\theta'>\theta$, this completes the proof of the first inequality of the lemma for $X\in\Z$. The general case can be
proved similarly. \\

Note that,  for every $\theta>\alpha$, when $h\geq n/2$, we can replace  (\ref{prem}) by
$$\P(L_{h}\geq  -i)\leq 2^{3/2}. c_{\theta}e^{\theta i}n^{-3/2}\gamma^h, \qquad (i,h,n \in \N).$$
Following the proof above ensures that there exists
$c_{\theta}''>0$ such  for all $i,  n, l \in \N $,
$$\P(L_n\geq -i, \  N_n(k)\geq 2l) \leq c_{\theta}'' e^{\theta i}\gamma^n n^{-3/2} e^{\theta k}/\sqrt{l}.$$
Thus, by (\ref{min}),  for every $x\geq 0$, there exists $C_x>0$ such that
$$
\P(N_n(k)\geq l \ \vert \ L_n\geq -x)\leq  2c_{\theta}'' C_x (k+1)e^{\theta k}/\sqrt{l}, \quad (k,n,l \in \N),
$$
which gives the second inequality of the lemma.
\end{proof}
$\newline$

\begin{proof}[Proof of  Corollary \ref{beta}]
Let $\alpha<1/2$ and $d>0$ given by Theorem \ref{asympta}. Fix $\alpha<\theta<\mu/2<1/2$. Choose also $k_0 \in \N$ such that
$$d \sum_{k\geq k_0}e^{(\theta-\mu /2)k}<1/2.$$

By (\ref{contr}), for every $x\geq 0$, there exists $D>0$ such that for every $n\in \N$,
 $$\P (N_n(k) \geq e^{\mu k} \ \vert \ L_n\geq -x)\leq D  e^{(\theta -\mu/2)k}$$
which is summable with respect to $k$. Thus, by Fatou's lemma,
$$\limsup_{n\rightarrow \infty}
\sum_{k\geq k_0} \P (N_n(k) \geq e^{\mu k} \ \vert \ L_n\geq -x)
\leq \sum_{k\geq k_0} \limsup_{n\rightarrow \infty} \P (N_n(k) \geq e^{\mu  k} \ \vert \ L_n\geq -x).$$
By Lemma \ref{rw}, this gives, for every $x>0$,
$$\limsup_{n\rightarrow \infty}
\sum_{k\geq k_0} \P (N_n(k) \geq e^{\mu k} \ \vert \ L_n\geq -x)
\leq d \sum_{k\geq k_0}e^{(\theta-\mu /2)k}.$$
Then,
$$\limsup_{n\rightarrow \infty} \P\big(\bigcup _{k\geq k_0} \{ N_n(k) \geq e^{\mu k}\} \ \vert
\ L_n\geq -x \big) < 1/2.$$
By Lemma \ref{rw} again, fix $N \in \N$ such that
$$\limsup_{n\rightarrow \infty} \P\big(\bigcup_{0\leq k <k_0} \{ N_n(k)  \geq N \}
\ \vert \ L_n\geq -x \big)\leq 1/4.$$
Then
$$\limsup_{n\rightarrow \infty} \P \big(\bigcup_{0\leq k <k_0} \{ N_n(k) \geq N \}
\bigcup _{k\geq k_0} \{ N_k(k)
\geq e^{\mu k}\}\ \vert \ L_n\geq -x \big) < 3/4.$$
Noting that
$$\sum_{i=0}^n
\exp(L_n-S_i)\leq \sum_{k=0}^{\infty} N_n(k)e^{-k},$$
this ensures that for every $x\geq 0$,
$$\liminf_{n\rightarrow \infty} \P\big(\sum_{i=0}^n
\exp(L_n-S_i) \leq  \beta  \ \vert \ L_n\geq -x  \big)> 1/4,$$
with $\beta:=\sum_{0\leq k <k_0} Ne^{-k+1}+ \sum_{k\geq k_0}e^{\mu k} e^{-k+1}.$ This gives the result.
\end{proof}
\section*{Acknowledgments}
I wish to thank Jean Bertoin, Amaury Lambert and Vladimir Vatutin for various suggestions, and the two anonymous referees of the previous draft for many corrections and improvements.

\end{document}